\newcommand{\Kr}{\operatorname{KR}}
\newcommand{\Na}{\operatorname{Na}}
\newcommand{\NA}{\operatorname{NA}}
\newcommand{\QSpec}{\operatorname{QSpec}}
\newcommand{\Max}{\operatorname{Max}}
\newcommand{\QMax}{\operatorname{QMax}}
\newcommand{\A}{\mathcal{A}}
\newtheorem{thm}{Theorem}[section]
\newtheorem{cor}[thm]{Corollary}
\newtheorem{lem}[thm]{Lemma}
\newtheorem{prop}[thm]{Proposition}
\newtheorem{exam}[thm]{Example}
\newtheorem{rem}[thm]{Remark}
\begin{document}

\bibliographystyle{amsplain}

\date{}

\author{Parviz Sahandi}

\address{Department of Pure Mathematics, Faculty of Mathematical Sciences, University of Tabriz, Tabriz,
Iran and School of Mathematics, Institute for Research in
Fundamental Sciences (IPM), P.O. Box: 19395-5746, Tehran, Iran}
\email{sahandi@ipm.ir}

\keywords{Semistar operation, Nagata ring, Kronecker function ring,
graded domain, graded-Pr\"{u}fer domain}

\subjclass[2010]{Primary 13A15, 13G05, 13A02}

\thanks{Parviz Sahandi was in part supported by a grant from IPM (No.
91130030)}

\title[Pr\"{u}fer $\star$-multiplication domains]{Characterizations of graded Pr\"{u}fer $\star$-multiplication domains, II}

\begin{abstract} Let $R=\bigoplus_{\alpha\in\Gamma}R_{\alpha}$ be a graded
integral domain and $\star$ be a semistar operation on $R$. For
$a\in R$, denote by $C(a)$ the ideal of $R$ generated by homogeneous
components of $a$ and for$f=f_0+f_1X+\cdots+f_nX^n\in R[X]$, let
$\A_f:=\sum_{i=0}^nC(f_i)$. Let $N(\star):=\{f\in R[X]\mid
f\neq0\text{ and }\A_f^{\star}=R^{\star}\}$. In this paper we study
relationships between ideal theoretic properties of
$\NA(R,\star):=R[X]_{N(\star)}$ and the homogeneous ideal theoretic
properties of $R$. For example we show that $R$ is a graded
Pr\"{u}fer-$\star$-multiplication domain if and only if
$\NA(D,\star)$ is a Pr\"{u}fer domain if and only if  $\NA(R,\star)$
is a B\'{e}zout domain. We also determine when $\NA(R,v)$ is a PID.
\end{abstract}

\maketitle

\section{Introduction}

\subsection{Graded integral domains}

Let $R=\bigoplus_{\alpha\in\Gamma}R_{\alpha}$ be a graded
(commutative) integral domain graded by an arbitrary grading
torsionless monoid $\Gamma$, that is $\Gamma$ is a commutative
cancellative monoid (written additively). Let
$\langle\Gamma\rangle:=\{a-b\mid a,b\in\Gamma\},$ be the quotient
group of $\Gamma$, which is a torsionfree abelian group.

Let $H$ be the saturated multiplicative set of nonzero homogeneous
elements of $R$. Then
$R_H=\bigoplus_{\alpha\in\langle\Gamma\rangle}(R_H)_{\alpha}$,
called the \emph{homogeneous quotient field of $R$}, is a graded
integral domain whose nonzero homogeneous elements are units. An integral ideal $I$ of $R$ is said
to be homogeneous if $I=\bigoplus_{\alpha\in\Gamma}(I\cap
R_{\alpha})$. A fractional ideal $I$ of
$R$ is \emph{homogeneous} if $sI$ is an integral homogeneous ideal
of $R$ for some $s\in H$ (thus $I\subseteq R_H$). An overring $T$ of $R$, with
$R\subseteq T\subseteq R_H$ will be called a \emph{homogeneous
overring} if $T=\bigoplus_{\alpha\in\langle\Gamma\rangle}(T\cap
(R_H)_{\alpha})$. Thus $T$ is a graded integral domain with
$T_{\alpha}=T\cap (R_H)_{\alpha}$. For more on graded integral domains and their divisibility
properties, see \cite{AA2}, \cite{North}.

\subsection{Motivations and results}

Let $D$ be a domain with quotient field $L$, and let $X$ be an
indeterminate over $L$. For each $f\in L[X]$, we let $c(f)$ denote
the content of the polynomial $f$, i.e., the (fractional) ideal of
$D$ generated by the coefficients of $f$. Let $\star$ be a semistar
operation on $D$. If $N_{\star}:=\{g\in D[X]\mid g\neq0\text{ and
}c(g)^{\star}=D^{\star}\}$, then $N_{\star}=
D[X]\backslash\bigcup\{P[X]\mid P\in\QMax^{\star_f}(D)\}$ is a
saturated multiplicative subset of $D[X]$. The ring of fractions
$$\Na(D,\star):=D[X]_{N_{\star}}$$ is called the $\star$-{\it Nagata domain (of $D$ with respect to the
semistar operation} $\star$). When $\star=d$, the identity
(semi)star operation on $D$, then $\Na(D,d)$ coincides with the
classical Nagata domain $D(X)$ (as in, for instance \cite[page
18]{Na}, \cite[Section 33]{G} and \cite{FL}).

The $\star$-Nagata ring has an important overring which is the \emph{Kronecker function ring with respect to $\star$} defined as follows (see, \cite[Section 32]{G}, \cite{FL}, \cite{FL2} and \cite{FL3}).

$$
\mbox{Kr}(D,\star):=\left\{\frac{f}{g}\bigg|  \begin{array}{l}
f,g\in D[X], g\neq0,\text{ and there is }0\neq h\in D[X]
\\\text{ such that } c(f)c(h)\subseteq(c(g)c(h))^{\star} \end{array} \right\}.
$$

The $\star$-Nagata ring and Kronecker function ring have many
interesting ring-theoretic properties. For example $D$ is a
P$\star$MD if and only if $\Na(D,\star)$ is a Pr\"{u}fer domain if
and only if $\Na(D,\star)$ is a B\'{e}zout domain if and only if
$\Na(D,\star)=\mbox{Kr}(D,\star)$. Also $\mbox{Kr}(D,\star)$ is
always a B\'{e}zout domain and that every invertible ideal of
$\Na(D,\star)$ is principal.

Let $R=\bigoplus_{\alpha\in\Gamma}R_{\alpha}$ be a graded integral
domain and $\star$ be a semistar operation on $R$. The aim of this
paper is to introduced a $\star$-Nagata and a Kronecker function
ring with respect to $\star$, on $R$ which is compatible with the
graded structure of the base ring $R$. More precisely in Section 2,
we introduced a $\star$-Nagata ring, denoted by $\NA(R,\star)$ and
we study its properties. For example  we show that
$\Max(\NA(R,\star))=\{Q\NA(R,\star)\mid Q\in
h\text{-}\QMax^{\star_f}(R)\}$ and that each invertible ideal of
$\NA(R,\star)$ is principal, and
$I^{\widetilde{\star}}=I\NA(R,\star)\cap R_H$ for every nonzero
finitely generated homogeneous ideal $I$ of $R$. We also introduced
a Kronecker function ring denoted by $\Kr(R,\star)$, and show that
$\Kr(R,\star)$ is a B\'{e}zout domain and that
$I^{\star_a}=I\Kr(R,\star)\cap R_H$ for every nonzero finitely
generated homogeneous ideal $I$ of $R$.

In Section 3, we are able to give the related characterizations of
graded Pr\"{u}fer-$\star$-multiplication domains. More precisely
among other things, we show that $R$ is a graded
Pr\"{u}fer-$\star$-multiplication domain if and only if
$\NA(R,\star)$ is a Pr\"{u}fer domain if and only if $\NA(R,\star)$
is a B\'{e}zout domain if and only if $\NA(R,\star)=\Kr(R,\star)$ if
and only if every (principal) ideal of $\NA(R,\star)$ is extended
from a homogeneous ideal of $R$. Also we show that $R$ is a graded
Krull domain if and only if $\NA(R,v)$ is a PID.

\subsection{Definitions related to semistar operations}

To facilitate the reading of the paper, we review some basic facts
on semistar operations. Let $D$ be an integral domain with quotient
field $L$. Let $\overline{\mathcal{F}}(D)$ denote the set of all
nonzero $D$-submodules of $L$, $\mathcal{F}(D)$ be the set of all
nonzero fractional ideals of $D$, and $f(D)$ be the set of all
nonzero finitely generated fractional ideals of $D$. Obviously,
$f(D)\subseteq\mathcal{F}(D)\subseteq\overline{\mathcal{F}}(D)$. As
in \cite{OM}, a {\it semistar operation on} $D$ is a map
$\star:\overline{\mathcal{F}}(D)\rightarrow\overline{\mathcal{F}}(D)$,
$E\mapsto E^{\star}$, such that, for all $0\neq x\in L$, and for all
$E, F\in\overline{\mathcal{F}}(D)$, the following three properties
hold: ($\star_1$)  $(xE)^{\star}=xE^{\star}$; ($\star_2$):
$E\subseteq F$ implies that $E^{\star}\subseteq F^{\star}$;
($\star_3$)  $E\subseteq E^{\star}$; and ($\star_4$)
$E^{\star\star}:=(E^{\star})^{\star}=E^{\star}$.

A semistar operation $\star$ is called a \emph{(semi)star operation on $D$}, if $D^{\star}=D$. Let $\star$ be a semistar operation on the domain $D$. For every
$E\in\overline{\mathcal{F}}(D)$, put $E^{\star_f}:=\cup F^{\star}$,
where the union is taken over all finitely generated $F\in f(D)$
with $F\subseteq E$. It is easy to see that $\star_f$ is a semistar
operation on $D$, and ${\star_f}$ is called \emph{the semistar
operation of finite type associated to} $\star$. Note that
$(\star_f)_f=\star_f$. A semistar operation $\star$ is said to be of
\emph{finite type} if $\star=\star_f$; in particular ${\star_f}$ is
of finite type. We say that a nonzero ideal $I$ of $D$ is a
\emph{quasi-$\star$-ideal} of $D$, if $I^{\star}\cap D=I$; a
\emph{quasi-$\star$-prime} (ideal of $D$), if $I$ is a prime
quasi-$\star$-ideal of $D$; and a \emph{quasi-$\star$-maximal}
(ideal of $D$), if $I$ is maximal in the set of all proper
quasi-$\star$-ideals of $D$. Each quasi-$\star$-maximal ideal is a
prime ideal. It was shown in \cite[Lemma 4.20]{FH} that if
$D^{\star} \neq L$, then each proper quasi-$\star_f$-ideal of $D$ is
contained in a quasi-$\star_f$-maximal ideal of $D$. We denote by
$\QMax^{\star}(D)$ (resp., $\QSpec^{\star}(D)$) the set of all
quasi-$\star$-maximal ideals (resp., quasi-$\star$-prime ideals) of
$D$.

If $\star_1$ and $\star_2$ are semistar operations on $D$, one says
that $\star_1\leq\star_2$ if $E^{\star_1}\subseteq E^{\star_2}$ for
each $E\in\overline{\mathcal{F}}(D)$ (cf. \cite[page 6]{OM}). This
is equivalent to saying that
$(E^{\star_1})^{\star_2}=E^{\star_2}=(E^{\star_2})^{\star_1}$ for
each $E\in\overline{\mathcal{F}}(D)$ (cf. \cite[Lemma 16]{OM}).

Given a semistar operation $\star$ on $D$, it is possible to construct a
semistar operation $\widetilde{\star}$, which is stable and of
finite type defined as follows: for each
$E\in\overline{\mathcal{F}}(D)$,
$$
E^{\widetilde{\star}}:=\{x\in K\mid xJ\subseteq E,\text{ for some
}J\subseteq R, J\in f(D), J^{\star}=D^{\star}\}.
$$

The most widely studied (semi)star operations on $D$ have been the
identity $d$, $v$, $t:=v_f$, and $w:=\widetilde{v}$ operations,
where $A^{v}:=(A^{-1})^{-1}$, with $A^{-1}:=(D:A):=\{x\in
K|xA\subseteq D\}$. If $\star$ is a (semi)star operation on $D$,
then $d\leq\star\leq v$.

Let $\star$ be a semistar operation on an integral domain $D$. We
say that $\star$ is an \emph{\texttt{e.a.b.} (endlich arithmetisch
brauchbar) semistar operation} of $D$ if, for all $E, F, G\in f(D)$,
$(EF)^{\star}\subseteq(EG)^{\star}$ implies that $F^{\star}\subseteq
G^{\star}$ (\cite[Definition 2.3 and Lemma 2.7]{FL2}). We can
associate  to any semistar operation $\star$ on $D$, an
\texttt{e.a.b.} semistar operation of finite type $\star_a$ on $D$,
called the \emph{\texttt{e.a.b.} semistar operation associated to
$\star$}, defined as follows for each $F\in f(D)$ and for each $E\in
\overline{F}(D)$:
\begin{align*}
F^{\star_a}:=&\bigcup\{((FH)^{\star}:H^{\star})\mid  H\in f(D)\},\\[1ex]
E^{\star_a}:=&\bigcup\{F^{\star_a}\mid  F\subseteq E, F\in f(D)\}
\end{align*}
\cite[Definition 4.4 and Proposition 4.5]{FL2}. It is known that
$\star_f\leq\star_a$ \cite[Proposition 4.5(3)]{FL2}. Obviously
$(\star_f)_a=\star_a$. Moreover, when $\star=\star_f$, then $\star$
is \texttt{e.a.b.} if and only if $\star=\star_a$ \cite[Proposition
4.5(5)]{FL2}.

Let $\star$ be a semistar operation on a domain $D$. Recall from
\cite{FJS} that, $D$ is called a \emph{Pr\"{u}fer
$\star$-multiplication domain} (for short, a P$\star$MD) if each
finitely generated ideal of $D$ is \emph{$\star_f$-invertible};
i.e., if $(II^{-1})^{\star_f}=D^{\star}$ for all $I\in f(D)$. When
$\star=v$, we recover the classical notion of P$v$MD; when
$\star=d_D$, the identity (semi)star operation, we recover the
notion of Pr\"{u}fer domain.

Let $R=\bigoplus_{\alpha\in\Gamma}R_{\alpha}$ be a graded integral
domain with quotient field $K$, and $\star$ be a semistar operation
on $R$. We say that $\star$ is \emph{homogeneous preserving} if
$\star$ sends homogeneous fractional ideals to homogeneous ones. It
is known that $\widetilde{\star}$ is homogeneous preserving
\cite[Proposition 2.3]{S}, and that if $\star$ is homogeneous
preserving, then so is $\star_f$ \cite[Lemma 2.4]{S}. Denote by
$h$-$\QSpec^{\star}(R)$ the homogeneous elements of
$\QSpec^{\star}(R)$ and let $h$-$\QMax^{\star}(R)$ denotes the set
of ideals of $R$ which are maximal in the set of all proper
homogeneous quasi-$\star$-ideals of $R$. It is shown that if
$R^{\star}\subsetneq R_H$ and $\star=\star_f$ homogeneous
preserving, then $h$-$\QMax^{\star_f}(R)(\subseteq
h$-$\QSpec^{\star}(R))$ is nonempty and each proper homogeneous
quasi-$\star_f$-ideal is contained in a homogeneous maximal
quasi-$\star_f$-ideal \cite[Lemma 2.1]{S}, and that
$h$-$\QMax^{\star_f}(R)=h$-$\QMax^{\widetilde{\star}}(R)$
\cite[Proposition 2.5]{S}.

Recall from \cite{S} that $R$ is called a \emph{graded Pr\"{u}fer
$\star$-multiplication domain (GP$\star$MD)} if every nonzero
finitely generated homogeneous ideal of $R$ is a
$\star_f$-invertible. When $\star=v$ we have the classical notion of
a \emph{graded Pr\"{u}fer $v$-multiplication domain (GP$v$MD)}
\cite{AA}. Also when $\star=d$, a GP$d$MD is called a
\emph{graded-Pr\"{u}fer domain} \cite{AC}. Although $R$ is a GP$v$MD
if and only if $R$ is a P$v$MD \cite[Theorem 6.4]{AA}, Anderson and
Chang \cite[Example 3.6]{AC} provided an example of a
graded-Pr\"{u}fer domain which is not Pr\"{u}fer. It is known that
$R$ is a GP$\star$MD if and only if $R_{H\backslash P}$ is a
graded-Pr\"{u}fer domain for all $P\in
h$-$\QMax^{\widetilde{\star}}(R)$ if and only if $R_P$ is a
valuation domain for all $P\in h$-$\QMax^{\widetilde{\star}}(R)$
\cite[Theorem 4.4]{S}, and that the notions of GP$\star$MD,
GP$\star_f$MD and GP$\widetilde{\star}$MD coincide.

\section{Nagata and Kronecker function rings}

\subsection{Nagata ring}

Let $R=\bigoplus_{\alpha\in\Gamma}R_{\alpha}$ be a graded integral domain with quotient field $K$, and let $H$ be the saturated multiplicative set of nonzero homogeneous elements of $R$.

If $a\in R$, we denote by $C(a)$ the homogeneous ideal of $R$
generated by homogeneous components of $a$. For
$f=f_0+f_1X+\cdots+f_nX^n\in R[X]$, we define the \emph{homogeneous
content ideal} of $f$ by $\A_f:=\sum_{i=0}^nC(f_i)$. It is easy to
see that $\A_f$ is a homogeneous finitely generated ideal of $R$,
and that if $R$ has trivial grading, then $\A_f=c(f)$ (note that in
\cite[Section 28]{G}, $c(f)$ is denoted by $A_f$). It is easy to see
that $\A_{f+g}\subseteq\A_f+\A_g$ and $a\A_f=\A_{af}$ for $f,g\in
R[X]$ and a homogeneous element $a$. Also it can be seen that
$\A_{fg}\subseteq\A_f\A_g$ for $f,g\in R[X]$. Indeed assume that
$f=\sum_{i=0}^{r}a_iX^i$ and $g=\sum_{j=0}^{s}b_jX^j$. Then
$fg=\sum_{\ell=0}^{r+s} c_{\ell}X^{\ell}$, where
$c_{\ell}:=\sum_{i+j=\ell}a_ib_j$. Since $C(a_ib_j)\subseteq
C(a_i)C(b_j)$, we have $\A_{fg}=\sum_{\ell=0}^{r+s}
C(c_{\ell})\subseteq\Sigma_{\ell=0}^{r+s}\Sigma_{i+j=\ell}
C(a_ib_j)\subseteq\sum_{\ell=0}^{r+s}\sum_{i+j=\ell}
C(a_i)C(b_j)=(\sum_{i=0}^{r} C(a_i))(\sum_{j=0}^{s}
C(b_j))=\A_f\A_g$.

We begin with the following graded analogue of Dedekind-Mertens lemma.

\begin{prop}\label{dm} Assume that $0\neq f,g\in R_H[X]$.
Then there is an integer $m\geq1$ such that
$\A_f^{m+1}\A_g=\A_f^m\A_{fg}$. In particular if $\star$ is an
\texttt{e.a.b.} semistar operation on $R$, then
$\A_{fg}^{\star}=(\A_f\A_g)^{\star}$.
\end{prop}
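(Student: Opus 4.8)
The plan is to reduce the graded statement to the classical Dedekind--Mertens lemma over an ordinary polynomial ring, and then to read off the $\star$-version formally from the definition of an \texttt{e.a.b.} operation. First observe that one containment is essentially free: since $\A_{fg}\subseteq\A_f\A_g$ (proved in the excerpt), we have $\A_f^m\A_{fg}\subseteq\A_f^{m+1}\A_g$ for every $m$, so the real content is the reverse inclusion, which is exactly the cancellation encoded in Dedekind--Mertens once the grading is translated correctly. Note also that it suffices to produce one $m\geq1$ with the equality: multiplying $\A_f^{m+1}\A_g=\A_f^m\A_{fg}$ by $\A_f$ shows it persists for all larger exponents, so any $m$ obtained can be enlarged to be at least $1$.

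The key device is to turn the homogeneous content $\A$ into an ordinary content. Write $f=\sum_i a_iX^i$ and $g=\sum_j b_jX^j$; multiplying $f$ and $g$ by suitable nonzero homogeneous elements (which only scales both sides of the target identity by the same homogeneous element of $R_H$, and hence is harmless) I may assume $a_i,b_j\in R$. Expand each coefficient into homogeneous components, $a_i=\sum_\alpha a_{i,\alpha}$ and $b_j=\sum_\beta b_{j,\beta}$. Only finitely many components are nonzero, so all relevant degrees lie in a finitely generated subgroup $G_0$ of $\langle\Gamma\rangle$; being finitely generated and torsionfree, $G_0$ is free, say $G_0\cong\mathbb{Z}^n$. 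Introducing indeterminates $Z_1,\dots,Z_n$ and writing $Z^\alpha$ for the Laurent monomial attached to $\alpha\in\mathbb{Z}^n$, I set $\hat f:=\sum_{i,\alpha}a_{i,\alpha}X^iZ^\alpha$ and $\hat g:=\sum_{j,\beta}b_{j,\beta}X^jZ^\beta$. A direct comparison of coefficients shows that the $X^\ell Z^\gamma$-coefficient of $\hat f\hat g$ is the degree-$\gamma$ homogeneous component of the $X^\ell$-coefficient of $fg$; hence, reading off ordinary contents over $R$, one gets $c(\hat f)=\A_f$, $c(\hat g)=\A_g$ and $c(\hat f\hat g)=\A_{fg}$. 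Multiplying $\hat f,\hat g$ by suitable monomials $Z^N$ (which merely permutes the monomials carrying each coefficient, so contents are unchanged) I may take $\hat f,\hat g$ to be genuine polynomials in $R[X,Z_1,\dots,Z_n]$.

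Now I apply the classical Dedekind--Mertens lemma (see, e.g., \cite[Section 28]{G}). After a Kronecker substitution $X\mapsto W^{e_0}$, $Z_k\mapsto W^{e_k}$, with the $e_k$ chosen large enough that the finitely many monomials occurring in $\hat f$, $\hat g$ and $\hat f\hat g$ are sent to pairwise distinct powers of a single variable $W$, the polynomials $\hat f,\hat g$ become one-variable polynomials over $R$ with the same content ideals and with their product preserved (the substitution is a ring homomorphism and no two of the relevant monomials collide). The one-variable lemma then gives an integer $m$ with $c(\hat f)^{m+1}c(\hat g)=c(\hat f)^m c(\hat f\hat g)$, that is, $\A_f^{m+1}\A_g=\A_f^m\A_{fg}$; enlarging $m$ if necessary, I take $m\geq1$.

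Finally, the \texttt{e.a.b.} consequence is immediate. Each of $\A_f$, $\A_g$, $\A_{fg}$ and $\A_f\A_g$ is a nonzero finitely generated fractional ideal, hence lies in $f(R)$. Putting $E:=\A_f^m$, $F:=\A_f\A_g$ and $G:=\A_{fg}$, applying $\star$ to the identity of the first part gives $(EF)^\star=(EG)^\star$, so in particular $(EF)^\star\subseteq(EG)^\star$ and $(EG)^\star\subseteq(EF)^\star$; the \texttt{e.a.b.} property then forces $F^\star=G^\star$, i.e.\ $(\A_f\A_g)^\star=\A_{fg}^\star$. I expect the only genuine obstacle to be the bookkeeping of the second paragraph: arranging the monomial encoding so that $\A$ really becomes an ordinary content and so that polynomial multiplication matches multiplication of homogeneous components (for which the torsionfreeness of $\langle\Gamma\rangle$, giving freeness of $G_0$, is exactly what is needed). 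Once this translation is in place, both the main identity and its $\star$-analogue follow from standard facts.
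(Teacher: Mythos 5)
Your proposal is correct, and its central device is the same as the paper's: encode each homogeneous component of each coefficient by an auxiliary monomial recording its degree, so that the homogeneous content $\A$ becomes an ordinary polynomial content ($c(\hat f)=\A_f$, $c(\hat g)=\A_g$, $c(\hat f\hat g)=\A_{fg}$), and then apply a Dedekind--Mertens statement; the e.a.b.\ consequence is read off from the definition exactly as you do (the paper simply calls it clear). The difference is in which Dedekind--Mertens statement is used and how one reaches it. The paper introduces a single variable $Y$ whose exponents range over the whole group $\langle\Gamma\rangle$, so $\widetilde f(X,Y)$ is a ``generalized polynomial'' (an element of a semigroup ring with torsionless exponent monoid), and it quotes Northcott's generalized content theorem \cite[Theorem 2]{No}, which is stated for precisely that setting. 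You instead stay inside honest polynomial rings: you scale $f,g$ by homogeneous elements (harmless, by $a\A_f=\A_{af}$ and cancellation in a domain), note that the finitely many occurring degrees generate a finitely generated, hence free, subgroup $G_0\cong\mathbb{Z}^n$ of $\langle\Gamma\rangle$, pass to Laurent monomials in $Z_1,\dots,Z_n$, clear the monomial denominators, and finally reduce to the classical one-variable lemma of \cite[Section 28]{G} by a Kronecker substitution. Your injectivity requirement there is stated correctly: since the substitution is a ring homomorphism, it suffices that it separate the monomials in the supports of $\hat f$, $\hat g$ and of the already-expanded product $\hat f\hat g$, so cancellation causes no loss of content. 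The trade-off is clear: the paper's route is shorter but leans on Northcott's more general theorem, while yours is more elementary and self-contained (only the one-variable Dedekind--Mertens is needed) at the cost of the freeness/Kronecker bookkeeping, and it additionally records the useful observation that the exponent $m$ can always be enlarged, so $m\geq1$ is automatic.
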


\begin{proof} Let $f=\sum_{i=0}^{n}a_iX^i$ and
$g=\sum_{j=0}^{k}b_jX^j$. Assume that $a_i=a_{i1}+\cdots+a_{in_i}$,
where $a_{ij}\in (R_H)_{\alpha_{ij}}$ for $i=0,\cdots,n$, and
likewise $b_i=b_{i1}+\cdots+b_{ik_i}$, where $b_{ij}\in
(R_H)_{\beta_{ij}}$ for $i=0,\cdots,k$. Put
$a_i(Y)=\sum_{j=1}^{n_i}a_{ij}Y^{\alpha_{ij}}$ for $i=0,\cdots,n$
and $b_i(Y)=\sum_{j=1}^{k_i}b_{ij}Y^{\beta_{ij}}$ for $i=0,\cdots,k$
and set $\widetilde{f}(X,Y)=\sum_{i=0}^{n}a_i(Y)X^i$ and
$\widetilde{g}(X,Y)=\sum_{i=0}^{k}b_i(Y)X^i$. Then it is easy to see
that $c(\widetilde{f})=\A_f$, $c(\widetilde{g})=\A_g$, and that
$c(\widetilde{f}\widetilde{g})=\A_{fg}$. Now using \cite[Theorem
2]{No}, there is a positive integer $m$ such that
$c(\widetilde{f})^{m+1}c(\widetilde{g})=c(\widetilde{f})^mc(\widetilde{f}\widetilde{g})$.
Hence $\A_f^{m+1}\A_g=\A_f^m\A_{fg}$. The in particular case is
clear now.
\end{proof}

\begin{lem} Set $N(\star):=\{f\in R[X]\mid f\neq0\text{ and }\A_f^{\star}=R^{\star}\}$.
Then $N(\star)$ is a saturated multiplicatively closed subset of
$R[X]$.
\end{lem}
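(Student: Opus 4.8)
The plan is to show that $N(\star)$ is multiplicatively closed, contains $1$, and is saturated. The key technical ingredient will be the graded Dedekind--Mertens lemma of Proposition~\ref{dm}, together with the formal properties of the homogeneous content $\A_{(-)}$ established in the preceding paragraph, namely $\A_{fg}\subseteq\A_f\A_g$ and $\A_f\A_g\subseteq\A_{fg}^{\star_a}$ (or more simply the identity $\A_f^{m+1}\A_g=\A_f^m\A_{fg}$ valid for any $f,g$). I will also use repeatedly the fact that applying a semistar operation is monotone and idempotent, so that $(\cdot)^{\star}$ interacts well with containments.

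First I would verify that $N(\star)$ is closed under multiplication. Given $f,g\in N(\star)$, so $\A_f^{\star}=\A_g^{\star}=R^{\star}$, I must show $\A_{fg}^{\star}=R^{\star}$. The containment $\A_{fg}\subseteq\A_f\A_g$ gives $\A_{fg}^{\star}\subseteq(\A_f\A_g)^{\star}\subseteq R^{\star}$ for free (using $R^{\star}R^{\star}=R^{\star}$ and monotonicity). For the reverse containment I would invoke the identity $\A_f^{m+1}\A_g=\A_f^m\A_{fg}$ from Proposition~\ref{dm}: applying $(\cdot)^{\star}$ and using that $\A_f^{\star}=R^{\star}$ forces $(\A_f^m\A_{fg})^{\star}=\A_{fg}^{\star}$ and $(\A_f^{m+1}\A_g)^{\star}=\A_g^{\star}=R^{\star}$, whence $\A_{fg}^{\star}=R^{\star}$. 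The identity $\A_1=R$ (for the constant polynomial $1$) shows $1\in N(\star)$, so $N(\star)$ is a multiplicative set. A clean way to organize this is to first prove the auxiliary claim that for any nonzero $f,g\in R[X]$ one has $\A_{fg}^{\star}=(\A_f\A_g)^{\star}$ whenever $\star$ is, say, replaced by a suitable operation; but since we only need the equality at the level $R^{\star}$, the telescoping argument above suffices and avoids assuming $\star$ is \texttt{e.a.b.}

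Next I would establish saturation: if $fg\in N(\star)$ then both $f\in N(\star)$ and $g\in N(\star)$. Since $\A_{fg}\subseteq\A_f\A_g\subseteq\A_f$ and $\subseteq\A_g$ (each content ideal contains the product), monotonicity gives $R^{\star}=\A_{fg}^{\star}\subseteq\A_f^{\star}\subseteq R^{\star}$ and likewise for $g$, forcing $\A_f^{\star}=\A_g^{\star}=R^{\star}$, hence $f,g\in N(\star)$. Here I am using that $\A_f^{\star}\subseteq R^{\star}$ always holds because $\A_f\subseteq R$. I expect the main obstacle to be purely bookkeeping: making sure that when passing from the Dedekind--Mertens identity to the statement about $\star$-closures I correctly use the idempotence and the absorption $R^{\star}\cdot R^{\star}=R^{\star}$, and that I treat $\star$ rather than $\star_f$ consistently (no finite-type hypothesis is needed since all the content ideals in play are finitely generated). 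Once the multiplication and saturation steps are assembled, the conclusion that $N(\star)$ is a saturated multiplicatively closed subset of $R[X]$ is immediate.
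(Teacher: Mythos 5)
Your proof is correct and follows essentially the same route as the paper's: both rest on the containment $\A_{fg}\subseteq\A_f\A_g$ (for saturation) and the graded Dedekind--Mertens identity $\A_f^{m+1}\A_g=\A_f^m\A_{fg}$ of Proposition~\ref{dm} together with the standard semistar identity $(E^{\star}F)^{\star}=(EF)^{\star}$ (for multiplicative closure). The paper merely compresses these two directions into the single chain of equivalences $fg\in N(\star)\Leftrightarrow\A_{fg}^{\star}=R^{\star}\Leftrightarrow\A_f^{\star}=\A_g^{\star}=R^{\star}$, which you have simply written out in full detail.
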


\begin{proof} Let $f,g\in R[X]$. Then $\A_f^{m+1}\A_g=\A_f^m\A_{fg}$ for some integer $m\geq1$ by Proposition \ref{dm}, and $\A_{fg}\subseteq\A_f\A_g$. Thus $fg\in N(\star)\Leftrightarrow\A_{fg}^{\star}=R^{\star}\Leftrightarrow\A_{f}^{\star}=\A_{g}^{\star}=R^{\star}\Leftrightarrow f,g\in N(\star)$.
\end{proof}

We set $$\NA(R,\star):=R[X]_{N(\star)}$$ and call it the
\emph{Nagata ring with respect to the semistar operation $\star$}.
Obviously $\NA(R,\star)=\NA(R,\star_f)=\NA(R,\widetilde{\star})$. It
is easy to see that if $R$ has trivial grading, then
$\NA(R,\star)=\Na(R,\star)$, but they are not the same in general
(see Example \ref{E}).

\begin{prop}\label{NA} Let $R=\bigoplus_{\alpha\in\Gamma}R_{\alpha}$ be a graded integral domain, and $\star$ be a semistar operation on $R$ such that $R^{\star}\subsetneq R_H$. Then
\begin{enumerate}
\item $N(\star)=R[X]\backslash\bigcup\{Q[X]\mid Q\in h\text{-}\QMax^{\widetilde{\star}}(R)\}$.
\item $\Max(\NA(R,\star))=\{Q\NA(R,\star)\mid Q\in h\text{-}\QMax^{\widetilde{\star}}(R)\}$.
\item $\NA(R,\star)=\bigcap\{R_Q(X)\mid Q\in h\text{-}\QMax^{\widetilde{\star}}(R)\}$.
\end{enumerate}
\end{prop}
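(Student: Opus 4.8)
The three parts build on one another, so the plan is to prove (1) first and then read off (2) and (3). For (1), I would work throughout with $\widetilde{\star}$, which is stable, of finite type, and homogeneous preserving, using that $\NA(R,\star)=\NA(R,\widetilde{\star})$ gives $N(\star)=N(\widetilde{\star})$. The target is the equivalence $f\in N(\star)\iff f\notin Q[X]$ for every $Q\in h\text{-}\QMax^{\widetilde{\star}}(R)$. The first observation is that, because $Q$ is homogeneous, $f\in Q[X]$ is equivalent to $\A_f\subseteq Q$ (each coefficient of $f$, hence each of its homogeneous components, lies in $Q$ exactly when $\A_f\subseteq Q$). For the forward direction, if $\A_f^{\widetilde{\star}}=R^{\widetilde{\star}}$ and $\A_f\subseteq Q$, then $R^{\widetilde{\star}}\subseteq Q^{\widetilde{\star}}$, forcing the contradiction $Q=Q^{\widetilde{\star}}\cap R=R$. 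For the converse, if $f\neq0$ and $\A_f^{\widetilde{\star}}\neq R^{\widetilde{\star}}$, then $\A_f^{\widetilde{\star}}\cap R$ is a proper homogeneous quasi-$\widetilde{\star}$-ideal containing $\A_f$ (here I use $R^{\star}\subsetneq R_H$, which also gives $R^{\widetilde{\star}}\subsetneq R_H$), and by \cite[Lemma 2.1]{S} it sits inside some $Q\in h\text{-}\QMax^{\widetilde{\star}}(R)$, whence $f\in Q[X]$.

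For (2), by (1) the complement $R[X]\setminus N(\star)=\bigcup\{Q[X]\}$, and each $Q[X]$ is a prime of $R[X]$ disjoint from $N(\star)$, so it survives in $\NA(R,\star)$ as the maximal ideal candidate $Q\NA(R,\star)=Q[X]\NA(R,\star)$. The heart of the argument is to show that every prime $P$ of $R[X]$ with $P\cap N(\star)=\emptyset$ is contained in a \emph{single} $Q[X]$. Given finitely many $f_1,\dots,f_k\in P$, I would splice them into one polynomial $g=f_1+f_2X^{d_1}+\cdots+f_kX^{d_{k-1}}\in P$, choosing the degree shifts $d_i$ large enough that $\A_g=\A_{f_1}+\cdots+\A_{f_k}$; since $g\in P$ avoids $N(\star)$, this sum lies in some member of $h\text{-}\QMax^{\widetilde{\star}}(R)$. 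Setting $\fa=\sum_{f\in P}\A_f$ and using that $\widetilde{\star}$ is of finite type, the equality $\fa^{\widetilde{\star}}=R^{\widetilde{\star}}$ would already hold on a finitely generated homogeneous subideal, hence on some $\A_{f_1}+\cdots+\A_{f_k}$, contradicting the previous step; so $\fa^{\widetilde{\star}}\neq R^{\widetilde{\star}}$ and $\fa\subseteq Q$ for some $Q\in h\text{-}\QMax^{\widetilde{\star}}(R)$, giving $P\subseteq Q[X]$. Maximality of the $Q[X]$ among primes disjoint from $N(\star)$ follows from the incomparability of distinct members of $h\text{-}\QMax^{\widetilde{\star}}(R)$, and passing to the localization yields the asserted description of $\Max(\NA(R,\star))$.

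For (3), I would identify $R_Q(X)$ with a localization of $\NA(R,\star)$. Since $N(\star)\subseteq R[X]\setminus Q[X]$, localizing $R[X]$ at the prime $Q[X]$ factors through $\NA(R,\star)$, and a direct check—inverting the constants outside $Q$ produces $R_Q$, and the remaining polynomials of unit content produce the ordinary Nagata ring—shows $R[X]_{Q[X]}=R_Q(X)=\NA(R,\star)_{Q\NA(R,\star)}$. As $\NA(R,\star)$ is a domain whose maximal ideals are exactly the $Q\NA(R,\star)$ by (2), the standard equality $\NA(R,\star)=\bigcap_{M\in\Max(\NA(R,\star))}\NA(R,\star)_M$ becomes $\NA(R,\star)=\bigcap\{R_Q(X)\mid Q\in h\text{-}\QMax^{\widetilde{\star}}(R)\}$.

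The main obstacle is the finiteness step in (2): reducing the infinite union $\bigcup\{Q[X]\}$, over which no prime-avoidance principle is available, to a single $Q[X]$ for each prime $P$ disjoint from $N(\star)$. The degree-shifting construction, which turns a finite family of polynomials into one whose homogeneous content is the sum of their contents, combined with the finite-type property of $\widetilde{\star}$, is exactly what makes this work; the delicate point is to keep all the ideals in play homogeneous so that \cite[Lemma 2.1]{S} continues to apply.
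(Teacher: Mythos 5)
Your proposal is correct and follows essentially the same route as the paper: part (1) via the equivalence $\A_f\subseteq Q\iff f\in Q[X]$ together with the existence of homogeneous quasi-$\widetilde{\star}$-maximal ideals from \cite[Lemma 2.1]{S}, part (2) by reducing every prime disjoint from $N(\star)$ to a single $Q[X]$ using exactly the same degree-shifting splice plus the finite-type property of $\widetilde{\star}$ (the paper phrases this as choosing $t\in T$ with $\A_t=\A_{t_1}+\cdots+\A_{t_r}$ and a polynomial $\ell\in\A_T[X]$ of trivial content, and then cites \cite[Proposition 4.8]{G} for the localization correspondence you verify by hand), and part (3) via $\NA(R,\star)_{Q\NA(R,\star)}=R[X]_{Q[X]}=R_Q(X)$ and intersection over maximal ideals.
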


\begin{proof} (1) Let $f\in R[X]$. Then
$f\in N(\star) \Leftrightarrow
\A_f^{\widetilde{\star}}=R^{\widetilde{\star}}\Leftrightarrow
\A_f\nsubseteq Q$ for each $Q\in
h$-$\QMax^{\widetilde{\star}}(R)\Leftrightarrow f\notin Q[X]$ for
each $Q\in h$-$\QMax^{\widetilde{\star}}(R)\Leftrightarrow f\in
R[X]\backslash\bigcup\{Q[X]\mid Q\in
h\text{-}\QMax^{\widetilde{\star}}(R)\}$.

(2) Using \cite[ Proposition 4.8]{G}, it is sufficient to show that
each prime ideal $T$ of $R[X]$ contained inside $\bigcup\{Q[X]\mid
Q\in h\text{-}\QMax^{\widetilde{\star}}(R)\}$ is contained in $Q[X]$
for some $Q\in h\text{-}\QMax^{\widetilde{\star}}(R)$. Let $\A_T$ be
the ideal generated by $\{\A_f\mid f\in T\}$. It is easy to see that
$\A_T$ is a homogeneous ideal of $R$ and that if
$T\subseteq\bigcup\{Q[X]\mid Q\in
h\text{-}\QMax^{\widetilde{\star}}(R)\}$, then
$\A_T^{\widetilde{\star}}\neq R^{\widetilde{\star}}$. Indeed if
$\A_T^{\widetilde{\star}}=R^{\widetilde{\star}}$, then we can find a
polynomial $\ell\in \A_T[X]$ such that
$\A_{\ell}^{\widetilde{\star}}=R^{\widetilde{\star}}$. Now
$$
\ell\in \A_{t_1}[X]+\cdots+\A_{t_r}[X]=(\A_{t_1}+\cdots+\A_{t_r})[X]
$$
with $(t_1,\ldots,t_r)\subseteq T$. Since $\A_{t_1}+\cdots+\A_{t_r}\subseteq \A_T$ and $\A_T$ is an ideal of $R$, then $\A_{t_1}+\cdots+\A_{t_r}=\A_t$, for some $t\in T$. Therefore $\A_{\ell}\subseteq \A_t$ and thus $\A_{\ell}^{\widetilde{\star}}=\A_t^{\widetilde{\star}}=R^{\widetilde{\star}}$. This is a contradiction, since $t\in T$ and thus $\A_t^{\widetilde{\star}}\subseteq Q$, for some $Q\in h\text{-}\QMax^{\widetilde{\star}}(R)$. So that $\A_T^{\widetilde{\star}}\neq R^{\widetilde{\star}}$ and there exists $Q\in h\text{-}\QMax^{\widetilde{\star}}(R)$ such that $\A_T\subseteq Q$. This implies that $T\subseteq Q[X]$, for some $Q\in h\text{-}\QMax^{\widetilde{\star}}(R)$.

(3) is an easy consequence of (2), since
$$
(R[X]_{N(\star)})_{QR[X]_{N(\star)}}=R[X]_{Q[X]}=R_Q(X),
$$
by \cite[Corollary 5.3 and Proposition 33.1]{G}.
\end{proof}

\begin{prop}\label{N} Let $R=\bigoplus_{\alpha\in\Gamma}R_{\alpha}$ be a graded integral domain,
and $\star$ be a semistar operation on $R$ such that
$R^{\star}\subsetneq R_H$. Then for each $I\in
\overline{\mathcal{F}}(R)$, we have
\begin{enumerate}
\item $I\NA(R,\star)=\bigcap\lbrace IR_Q(X)\mid Q\in h$-$\QMax^{\widetilde{\star}}(R)\rbrace$.
\item $I\NA(R,\star)\cap K=\bigcap\{IR_Q\mid Q\in h$-$\QMax^{\widetilde{\star}}(R)\}$.
\item If $I$ is homogeneous ideal of $R$, then $I^{\widetilde{\star}}=I\NA(R,\star)\cap R_H$.
\end{enumerate}
\end{prop}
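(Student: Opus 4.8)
The plan is to establish the three localization formulas by exploiting the description of $\NA(R,\star)$ as an intersection of Nagata localizations $R_Q(X)$ from Proposition~\ref{NA}(3), together with the fact that each $R_Q$ is the localization of $R[X]_{N(\star)}$ at the prime $Q[X]_{N(\star)}$. First I would prove part (1). The inclusion $I\NA(R,\star)\subseteq\bigcap_Q IR_Q(X)$ is immediate from $\NA(R,\star)\subseteq R_Q(X)$. For the reverse inclusion I would take $z\in\bigcap_Q IR_Q(X)$ and use the standard ideal-theoretic principle that for a module $I$ over the domain $R[X]_{N(\star)}$, an element lying in every localization $IR_Q(X)=I(R[X]_{N(\star)})_{Q[X]_{N(\star)}}$ at the maximal ideals described in Proposition~\ref{NA}(2) must lie in $I\NA(R,\star)$ itself; this is just the local-global principle that a submodule equals the intersection of its localizations at all maximal ideals. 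The only thing to verify is that the maximal ideals $Q\NA(R,\star)$ exhaust $\Max(\NA(R,\star))$, which is exactly Proposition~\ref{NA}(2), and that localizing $\NA(R,\star)$ at $Q\NA(R,\star)$ gives $R_Q(X)$, which is the computation in the proof of Proposition~\ref{NA}(3).

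Next I would derive part (2) from part (1) by intersecting with $K$. Since $K$ is the quotient field of $R$, and $R_Q(X)\cap K=R_Q$ (because $R_Q(X)=R_Q[X]_{N}$ localizes only in the polynomial-variable direction and contracts to $R_Q$ on the constants), intersecting the equality of part (1) with $K$ yields
\[
I\NA(R,\star)\cap K=\Bigl(\bigcap_Q IR_Q(X)\Bigr)\cap K=\bigcap_Q\bigl(IR_Q(X)\cap K\bigr)=\bigcap_Q IR_Q,
\]
where the last step requires $IR_Q(X)\cap K=IR_Q$. I would justify this by writing elements of $IR_Q(X)$ as fractions with denominators in $N(\star)$ and numerators in $IR_Q[X]$, then observing that a fraction lying in $K$ forces the comparison of contents to collapse to the degree-zero part, giving membership in $IR_Q$.

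Finally, part (3) is the substantive claim and I expect it to be the main obstacle. For a nonzero finitely generated homogeneous ideal $I$, the key identity is $I^{\widetilde{\star}}=\bigcap\{IR_Q\mid Q\in h\text{-}\QMax^{\widetilde{\star}}(R)\}$, which should follow from the stable-and-finite-type nature of $\widetilde{\star}$: the operation $\widetilde{\star}$ is known to satisfy $E^{\widetilde{\star}}=\bigcap\{ER_Q\mid Q\in h\text{-}\QMax^{\widetilde{\star}}(R)\}$ for homogeneous $E$, this being the graded analogue of the standard description of a stable operation of finite type as an intersection of localizations at its quasi-maximal ideals. Combining this with part (2) gives $I^{\widetilde{\star}}=I\NA(R,\star)\cap K$. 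The remaining gap is to replace $K$ by $R_H$: since $I$ is homogeneous, $I\NA(R,\star)$ is generated by homogeneous elements and its intersection with $R_H$ agrees with its intersection with $K$ refined to the homogeneous quotient field, because $I^{\widetilde{\star}}\subseteq R_H$ (as $\widetilde{\star}$ is homogeneous preserving) forces the contraction to land in $R_H$. The delicate point I would watch carefully is the passage from a general element of $R_H$ to its homogeneous components and ensuring that the content computation respects the grading; here the identity $\A_f^{\widetilde{\star}}=R^{\widetilde{\star}}$ characterizing $N(\star)$ (Proposition~\ref{NA}(1)) is what lets me control homogeneous contents, so I would reduce to checking the statement on homogeneous elements and invoke the homogeneous-preserving property of $\widetilde{\star}$ to finish.
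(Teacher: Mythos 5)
Your parts (1) and (2) are correct and essentially identical to the paper's own argument: part (1) is the local--global principle over $\NA(R,\star)$ combined with Proposition \ref{NA}(2)--(3), and part (2) follows by intersecting with $K$ and using $IR_Q(X)\cap K=IR_Q$, which the paper simply cites from \cite[Proposition 33.1(4)]{G} rather than reproving via your content argument.

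Part (3), however, rests on a false identity. You assert that $E^{\widetilde{\star}}=\bigcap\{ER_Q\mid Q\in h\text{-}\QMax^{\widetilde{\star}}(R)\}$ for homogeneous $E$, with $R_Q$ the \emph{ordinary} localization, and from it you deduce $I^{\widetilde{\star}}=I\NA(R,\star)\cap K$. Both claims fail in general. The graded analogue of the stable finite-type description, which is what the paper actually invokes (\cite[Proposition 2.6]{S}), uses the \emph{homogeneous} localizations: $I^{\widetilde{\star}}=\bigcap\{IR_{H\setminus Q}\mid Q\in h\text{-}\QSpec^{\widetilde{\star}}(R)\}$, and since $IR_{H\setminus Q}=IR_Q\cap R_H$ for homogeneous $I$, the intersection with $R_H$ enters at this very step -- it is not an afterthought that can be patched in at the end. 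Concretely, take $R=k[x]$ with the degree grading, $\star=d$, and $I=R$. Then $h\text{-}\QMax^{\widetilde{\star}}(R)=\{(x)\}$, so $\bigcap\{IR_Q\}=k[x]_{(x)}$ while $I^{\widetilde{\star}}=k[x]$; likewise $I\NA(R,\star)\cap K=k[x]_{(x)}$, which strictly contains $I\NA(R,\star)\cap R_H=k[x]$ (the element $1/(1+x)$ lies in the former but not in the latter, as it is not a Laurent polynomial). Your proposed repair -- that the contraction ``lands in $R_H$'' because $I^{\widetilde{\star}}\subseteq R_H$ -- is circular: it presupposes $I\NA(R,\star)\cap K=I^{\widetilde{\star}}$, which is precisely what is false. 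The correct chain, and the paper's proof, is $I^{\widetilde{\star}}=\bigcap IR_{H\setminus Q}=\bigcap\bigl(IR_Q\cap R_H\bigr)=\bigl(\bigcap IR_Q\bigr)\cap R_H=I\NA(R,\star)\cap R_H$, where your part (2) is used only in the last equality. (A minor further point: you restrict (3) to finitely generated homogeneous ideals, but the statement and \cite[Proposition 2.6]{S} concern arbitrary homogeneous ideals, so no finiteness reduction is needed or justified.)
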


\begin{proof} (1) By Proposition \ref{NA}, we have
\begin{align*}
        I\NA(R,\star) &=\bigcap\{(I\NA(R,\star))_M\mid M\in\Max(\NA(R,\star))\}\\
        &=\bigcap\{(IR[X]_{N(\star)})_{QR[X]_{N(\star)}}\mid Q\in h\text{-}\QMax^{\widetilde{\star}}(R)\}\\
        &=\bigcap\{IR[X]_{Q[X]}\mid Q\in h\text{-}\QMax^{\widetilde{\star}}(R)\}\\
        &=\bigcap\{IR_Q(X)\mid Q\in h\text{-}\QMax^{\widetilde{\star}}(R)\}.
\end{align*}
(2) By using (1) and \cite[Proposition 33.1(4)]{G}, we have
\begin{align*}
        I\NA(R,\star)\cap K &=\bigcap\{IR_Q(X)\mid Q\in h\text{-}\QMax^{\widetilde{\star}}(R)\}\cap K\\
        &=\bigcap\{IR_Q(X)\cap K\mid Q\in h\text{-}\QMax^{\widetilde{\star}}(R)\}\\
        &=\bigcap\{IR_Q\mid Q\in h\text{-}\QMax^{\widetilde{\star}}(R)\}.
\end{align*}
(3) By \cite[Proposition 2.6]{S}, we have
\begin{align*}
        I^{\widetilde{\star}}&=\bigcap\{IR_{H\setminus Q}\mid Q\in h\text{-}\QSpec^{\widetilde{\star}}(R)\}\\
        &=\bigcap\{IR_Q\cap R_H\mid Q\in h\text{-}\QSpec^{\widetilde{\star}}(R)\}\\
        &=\bigcap\{IR_Q\mid Q\in h\text{-}\QMax^{\widetilde{\star}}(R)\}\cap R_H\\
        &=I\NA(R,\star)\cap R_H.
\end{align*}
The forth equality follows from (2).
\end{proof}

\begin{lem}\label{inv} Let $R=\bigoplus_{\alpha\in\Gamma}R_{\alpha}$ be a graded integral domain, and $\star$ be a semistar operation on $R$ such that $R^{\star}\subsetneq R_H$. Then for each nonzero finitely generated homogeneous ideal $I$ of $R$, $I$ is $\star_f$-invertible if and only if, $I\NA(R,\star)$
is invertible.
\end{lem}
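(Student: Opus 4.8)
The statement relates $\star_f$-invertibility of a finitely generated homogeneous ideal $I$ of $R$ to ordinary invertibility of its extension $I\NA(R,\star)$. The natural strategy is to pass to the localizations at maximal ideals, since both invertibility and $\star_f$-invertibility are detectable locally at the appropriate family of primes. By Proposition \ref{NA}(2), the maximal ideals of $\NA(R,\star)$ are exactly the $Q\NA(R,\star)$ for $Q\in h\text{-}\QMax^{\widetilde{\star}}(R)$, and the corresponding localization is $R_Q(X)$. The plan is therefore to reduce both conditions to a statement indexed by $Q\in h\text{-}\QMax^{\widetilde{\star}}(R)$ and compare them termwise.

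First I would unwind what $\star_f$-invertibility of $I$ means. Since $I$ is finitely generated homogeneous and $R^{\star}\subsetneq R_H$, recall from the excerpt that $h\text{-}\QMax^{\star_f}(R)=h\text{-}\QMax^{\widetilde{\star}}(R)$ and that $\widetilde{\star}$ is stable and of finite type. One knows that for finitely generated ideals, $\star_f$-invertibility is equivalent to $\widetilde{\star}$-invertibility, which can be tested locally: $I$ is $\star_f$-invertible if and only if $IR_Q$ is invertible (equivalently principal, since $R_Q$ is local) for every $Q\in h\text{-}\QMax^{\widetilde{\star}}(R)$. This is the graded analogue of the standard local characterization of $t$-invertibility, and I would invoke the local description of $\widetilde{\star}$ from \cite[Proposition 2.6]{S} used already in Proposition \ref{N}(3).

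Next I would treat the invertibility of $I\NA(R,\star)$. Since $\NA(R,\star)$ is a domain, $I\NA(R,\star)$ is invertible if and only if it is locally principal at every maximal ideal, i.e. if and only if $I\NA(R,\star)_{Q\NA(R,\star)}=IR_Q(X)$ is principal for every $Q\in h\text{-}\QMax^{\widetilde{\star}}(R)$, using Proposition \ref{NA}(2). So the whole lemma collapses to the local comparison: for a fixed $Q$, the ideal $IR_Q$ is principal in $R_Q$ if and only if $IR_Q(X)=IR_Q[X]_{Q[X]}$ is principal in $R_Q(X)$. One direction is immediate, since a generator of $IR_Q$ extends to a generator of $IR_Q(X)$. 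For the converse, the key point is that $R_Q(X)=R_Q[X]_{Q[X]}$ has residue field a transcendental extension and that content considerations let one descend a principal generator: if $IR_Q(X)=(h)$ with $h\in R_Q[X]$, comparing contents via the Dedekind–Mertens result (Proposition \ref{dm}) forces $IR_Q$ to be generated by the content of $h$, which is principal because $R_Q$ is local and $I$ is finitely generated.

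The main obstacle will be this last descent step — making precise the passage from a principal generator of $IR_Q(X)$ in the polynomial ring to a principal generator of $IR_Q$ itself. Here the homogeneous content ideal $\A_{(-)}$ and Proposition \ref{dm} do the real work, replacing the classical content-formula argument; I expect to need that $Q$ is homogeneous so that extension–contraction along $R_Q\to R_Q(X)$ interacts correctly with the grading, and to use that units of $R_Q(X)$ are precisely the fractions with unit content. Once the local equivalence is established, the lemma follows by quantifying over all $Q\in h\text{-}\QMax^{\widetilde{\star}}(R)$ and combining the two local characterizations above.
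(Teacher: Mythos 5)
Your forward direction is essentially the paper's own: localize at the maximal ideals $Q\NA(R,\star)$, $Q\in h\text{-}\QMax^{\widetilde{\star}}(R)$, given by Proposition \ref{NA}(2), obtain that $IR_Q(X)$ is principal from the local characterization of $\star_f$-invertibility (the paper cites \cite[Theorem 2.23]{FP} here), and conclude from finite generation plus local principality (\cite[Theorem 7.3]{G}). The genuine gap is in your converse, at exactly the step you flag as the main obstacle. Your content comparison does yield $IR_Q=c(h)R_Q$ when $IR_Q(X)=hR_Q(X)$: after normalizing $h\in IR_Q[X]$, each generator $a$ of $IR_Q$ satisfies $ag=hk$ with $c(g)=R_Q$, so $aR_Q=c(ag)=c(hk)\subseteq c(h)R_Q\subseteq IR_Q$ (note that over $R_Q$ this is the classical content and the classical Dedekind--Mertens lemma, not Proposition \ref{dm}, since $R_Q$ carries no grading). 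But the final claim that this ideal ``is principal because $R_Q$ is local and $I$ is finitely generated'' is a non sequitur: finitely generated ideals of local domains need not be principal (consider $(x,y)$ in $k[x,y]_{(x,y)}$), and principality of $IR_Q$ --- equivalently its invertibility --- is precisely what has to be proved. Content considerations alone will not produce it. What closes the gap is descent of invertibility along the faithfully flat extension $R_Q\subseteq R_Q(X)$: since $I$ is finitely generated, $(IR_Q(X))^{-1}=(IR_Q)^{-1}R_Q(X)$, so invertibility of $IR_Q(X)$ gives $(IR_Q)(IR_Q)^{-1}R_Q(X)=R_Q(X)$; contracting by faithful flatness, $(IR_Q)(IR_Q)^{-1}=R_Q$, and an invertible ideal of a local ring is principal. (A Nakayama-type unit-content argument also works.)

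Once you are willing to invoke flatness, the paper's converse shows the local machinery can be skipped entirely: flatness of $\NA(R,\star)$ over $R$ gives $I^{-1}\NA(R,\star)=(I\NA(R,\star))^{-1}$, hence $(II^{-1})\NA(R,\star)=\NA(R,\star)$, hence $II^{-1}\cap N(\star)\neq\emptyset$; for $f$ in this intersection one has $\A_f\subseteq II^{-1}$ (homogeneity of $II^{-1}$ enters here), so $R^{\star}=\A_f^{\star}\subseteq(II^{-1})^{\star_f}\subseteq R^{\star}$. This global route also spares you the other assertion you leave as ``one knows'', namely that $\star_f$-invertibility of a finitely generated homogeneous ideal can be tested at the \emph{homogeneous} quasi-$\widetilde{\star}$-maximal ideals only. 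That assertion is true, but it is not merely the ungraded standard fact: it requires the graded inputs that $I^{-1}$, $II^{-1}$ and $(II^{-1})^{\widetilde{\star}}\cap R$ are homogeneous and \cite[Lemma 2.1]{S} to produce a homogeneous quasi-$\widetilde{\star}$-maximal ideal containing a proper homogeneous quasi-$\widetilde{\star}$-ideal; so if you keep your local strategy, this step needs to be argued, not just cited as standard.
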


\begin{proof} Let $I$ be nonzero finitely
generated homogeneous ideal of $R$, such that $I$ is
$\star_f$-invertible. Let $Q\NA(R,\star)\in\Max(\NA(R,\star))$, where $Q\in
h$-$\QMax^{\widetilde{\star}}(R)$ by Proposition \ref{NA}(2). Thus by
\cite[Theorem 2.23]{FP},
$(I\NA(R,\star))_{Q\NA(R,\star)}=IR_Q(X)$ is invertible (is
principal) in $R_Q(X)$. Hence $I\NA(R,\star)$ is invertible by
\cite[Theorem 7.3]{G}. Conversely, assume that $I$ is finitely
generated, and $I\NA(R,\star)$ is invertible. By flatness we
have
$I^{-1}\NA(R,\star)=(R:I)\NA(R,\star)=(\NA(R,\star):I\NA(R,\star))=(I\NA(R,\star))^{-1}$.
Therefore, $(II^{-1})\NA(R,\star)=(I\NA(R,\star))
(I^{-1}\NA(R,\star))\\=(I\NA(R,\star))(I\NA(R,\star))^{-1}=\NA(R,\star)$.
Hence $II^{-1}\cap N(\star)\neq\emptyset$. Let $f\in II^{-1}\cap
N(\star)$. So that
$R^{\star}=A_f^{\star}\subseteq(II^{-1})^{\star_f}\subseteq
R^{\star}$. Thus $I$ is $\star_f$-invertible.
\end{proof}

\begin{cor}\label{C(f)} Let $R=\bigoplus_{\alpha\in\Gamma}R_{\alpha}$ be a graded integral domain, $\star$ be a semistar operation on $R$ such that $R^{\star}\subsetneq R_H$ and $0\neq f\in R[X]$. Then the following conditions are equivalent:
\begin{enumerate}
\item $\A_f$ is $\star_f$-invertible.
\item $\A_f\NA(R,\star)$ is invertible.
\item $\A_f\NA(R,\star)=f\NA(R,\star)$.
\end{enumerate}
\end{cor}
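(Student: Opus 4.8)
The plan is to prove the three equivalences in Corollary \ref{C(f)} by establishing the cycle $(1)\Rightarrow(2)\Rightarrow(3)\Rightarrow(1)$, relying throughout on the fact that $\A_f$ is a nonzero finitely generated homogeneous ideal of $R$, so that Lemma \ref{inv} applies directly.

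The implication $(1)\Leftrightarrow(2)$ is essentially immediate: since $\A_f$ is a nonzero finitely generated homogeneous ideal, Lemma \ref{inv} says precisely that $\A_f$ is $\star_f$-invertible if and only if $\A_f\NA(R,\star)$ is invertible. So no real work is needed here beyond citing Lemma \ref{inv}.

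For $(2)\Rightarrow(3)$, I would first observe the containment $f\NA(R,\star)\subseteq\A_f\NA(R,\star)$, which holds because every homogeneous component of every coefficient of $f$ lies in $\A_f$, so $f\in\A_f[X]\subseteq\A_f\NA(R,\star)$. To get the reverse containment (and hence equality), the natural strategy is to work locally: by Proposition \ref{NA}(2) the maximal ideals of $\NA(R,\star)$ are exactly the $Q\NA(R,\star)$ with $Q\in h\text{-}\QMax^{\widetilde{\star}}(R)$, and localizing at such a $Q$ gives $\A_f R_Q(X)$ and $fR_Q(X)$ inside $R_Q(X)$. Since $R_Q$ is a (quasi-local) graded-localization, the content ideal $\A_f$ becomes principal after localizing — here one uses that $\A_f\NA(R,\star)$ is invertible, hence locally principal, so $\A_f R_Q(X)=gR_Q(X)$ for some $g$, and a Dedekind–Mertens / content argument (Proposition \ref{dm}) identifies this principal generator with $f$ up to a unit. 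Checking equality of the two ideals at every maximal ideal then yields $\A_f\NA(R,\star)=f\NA(R,\star)$ globally.

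Finally, $(3)\Rightarrow(2)$ is formal: if $\A_f\NA(R,\star)=f\NA(R,\star)$, then $\A_f\NA(R,\star)$ is a nonzero principal ideal of the domain $\NA(R,\star)$, and every nonzero principal ideal of a domain is invertible, giving (2). The main obstacle is the local argument in $(2)\Rightarrow(3)$: one must verify that over each $R_Q(X)$ the invertible (locally principal) ideal $\A_f R_Q(X)$ is actually generated by $f$ itself rather than by some other element. The cleanest route is to note that in the quasi-local setting $\A_f R_Q(X)$ is principal, and by the graded Dedekind–Mertens lemma (Proposition \ref{dm}) applied over $R_Q$ the content of $f$ and the element $f$ generate the same ideal in the Nagata-type localization, precisely because the relevant content obstruction becomes a unit; assembling these local equalities via the fact that an ideal equals the intersection of its localizations (as in Proposition \ref{N}(1)) completes the proof.
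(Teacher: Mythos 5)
Your scaffolding coincides with the paper's proof: $(1)\Leftrightarrow(2)$ is exactly Lemma \ref{inv} applied to the homogeneous finitely generated ideal $\A_f$, $(3)\Rightarrow(2)$ is trivial, and $(2)\Rightarrow(3)$ is attempted by localizing at the maximal ideals $Q\NA(R,\star)$, $Q\in h\text{-}\QMax^{\widetilde{\star}}(R)$, just as the paper does. The genuine gap is that your entire argument for $(2)\Rightarrow(3)$ --- the only implication with real content --- is the assertion that ``a Dedekind--Mertens / content argument identifies this principal generator with $f$ up to a unit.'' That assertion \emph{is} the statement to be proved, not a proof of it. Proposition \ref{dm} is a statement about contents of \emph{products}; to use it you must first manufacture a polynomial identity relating $f$ to the local generator. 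With an anonymous generator ``some $g$'' of $\A_fR_Q(X)$ you have no such identity: you would have to clear denominators to replace $g$ by a polynomial, then prove $\A_{g}R_Q(X)=gR_Q(X)$ for that polynomial (not automatic; it needs homogeneity of $\A_f$ and a further application of Proposition \ref{dm}), and only then could a unit be extracted. None of this is in your sketch; ``the relevant content obstruction becomes a unit'' is precisely the unproved claim.

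What fills the hole, and is how the paper argues, is a careful \emph{choice} of the local generator. Write $f=f_0+\cdots+f_nX^n$ and $f_i=a_{i1}+\cdots+a_{ik_i}$ with each $a_{ij}$ homogeneous, so $\A_f$ is generated by the finitely many homogeneous elements $a_{ij}$. Since $R_Q(X)=R[X]_{Q[X]}$ is quasi-local and $\A_fR_Q(X)$ is principal, a Nakayama-type argument (\cite[Proposition 7.4]{G}) gives $\A_fR_Q(X)=a_{ij}R_Q(X)$ for one of the \emph{homogeneous} generators $a_{ij}$ themselves. Then $f/a_{ij}\in R_Q(X)$, so $f/a_{ij}=g/h$ with $g\in R[X]$ and $h\in R[X]\setminus Q[X]$, i.e. $fh=a_{ij}g$. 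Now Proposition \ref{dm} applies: $\A_h^{m+1}\A_f=\A_h^m\A_{a_{ij}g}=a_{ij}\A_h^m\A_g$, where the last equality uses $\A_{ag}=a\A_g$ for \emph{homogeneous} $a$ --- this is where homogeneity of the chosen generator is indispensable. Since $Q$ is homogeneous, $h\notin Q[X]$ forces $\A_h\not\subseteq Q$, hence $\A_hR_Q(X)=R_Q(X)$; localizing the identity and cancelling $a_{ij}$ gives $\A_gR_Q(X)=R_Q(X)$, so $g\notin Q[X]$ and $g/h$ is a unit of $R_Q(X)$. Therefore $fR_Q(X)=a_{ij}R_Q(X)=\A_fR_Q(X)$, and checking this at every maximal ideal of $\NA(R,\star)$ yields $\A_f\NA(R,\star)=f\NA(R,\star)$. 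Your write-up has the right strategy but omits this entire mechanism, which is the heart of the corollary.
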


\begin{proof} $(1)\Leftrightarrow(2)$ This follows from Lemma \ref{inv} because $\A_f$ is homogeneous.

$(2)\Rightarrow(3)$ Let $f=f_0+f_1X+\cdots+f_nX^n$, and $f_i=a_{i1}+\cdots+a_{ik_i}$, where each $a_{ij}$ is a homogeneous element for $i=0,\ldots,n$. By Proposition \ref{NA}, every maximal ideal of $\NA(R,\star)$ has the form $Q\NA(R,\star)$ for a homogeneous maximal quasi-$\widetilde{\star}$-ideal $Q$ of $R$; so it suffices to show that $\A_fR_Q(X)=fR_Q(X)$ \cite[Theorem 4.10]{G}.

Note that $\A_fR_Q(X)=(\A_f\NA(R,\star))_{Q\NA(R,\star)}=a_{ij}R_Q(X)$ for some $a_{ij}$ by \cite[Proposition 7.4]{G}. Thus $\frac{a_{st}}{a_{ij}}\in R_Q(X)$ for all $a_{st}$; so $\frac{f}{a_{ij}}\in R_Q(X)$. Hence $\frac{f}{a_{ij}}=\frac{g}{h}$, and $fh=ga_{ij}$ for some $g\in R[X]$ and $h\in R[X]\backslash Q[X]$. By Proposition \ref{dm}, There is an integer $m\geq1$ such that $\A_h^{m+1}\A_f=\A_h^m\A_{hf}=\A_h^m\A_{a_{ij}g}=a_{ij}\A_h^m\A_g$. Hence $a_{ij}R_Q(X)=\A_fR_Q(X)=(\A_h^{m+1}\A_f)R_Q(X)=(a_{ij}\A_h^m\A_g)R_Q(X)=(a_{ij}\A_g)R_Q(X)$, and so $\A_gR_Q(X)=R_Q(X)$, which means that $\frac{1}{g}\in R_Q(X)$. Hence $\frac{f}{a_{ij}}=\frac{g}{h}$ is a unit in $R_Q(X)$. Therefore $\A_fR_Q(X)=fR_Q(X)$.

$(3)\Rightarrow(2)$ is clear.
\end{proof}

It is known that any invertible ideal of $D(X)$ is principal \cite[Theorem 2]{A}. More generally,
if $*$ is a star operation on $D$, then any invertible ideal of $\Na(D,*)$ is principal,
\cite[Theorem 2.14]{Kang}. Now we give a generalization of these results.

\begin{thm}\label{pic} Let $R=\bigoplus_{\alpha\in\Gamma}R_{\alpha}$ be a graded integral domain, and $\star$ be a semistar operation on $R$ such that $R^{\star}\subsetneq R_H$. Then any invertible ideal of $\NA(R,\star)$ is principal.
\end{thm}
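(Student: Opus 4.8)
The plan is to reduce an arbitrary invertible ideal of $\NA(R,\star)$ to one extended from a homogeneous ideal of $R$, and then invoke Corollary \ref{C(f)} to trivialize it. First I would observe that any invertible ideal of $\NA(R,\star)$ is finitely generated (invertible ideals are always finitely generated) and may be taken to be an integral ideal after multiplying by a suitable element of the quotient field. So let $\mathcal{I}$ be an invertible integral ideal of $\NA(R,\star)$. The key reduction is to show that $\mathcal{I}$ is extended from a (finitely generated homogeneous) ideal of $R$, i.e. that $\mathcal{I}=J\NA(R,\star)$ for some nonzero finitely generated homogeneous ideal $J$ of $R$.

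To achieve this reduction I would work locally. By Proposition \ref{NA}(2), the maximal ideals of $\NA(R,\star)$ are exactly the $Q\NA(R,\star)$ with $Q\in h\text{-}\QMax^{\widetilde{\star}}(R)$, and by Proposition \ref{NA}(3) the localizations are the $R_Q(X)$. Since $\mathcal{I}$ is invertible, each $\mathcal{I}R_Q(X)$ is principal, generated by a single element; the standard Dedekind--Mertens machinery in $R_Q(X)$ (exactly as exploited in the proof of Corollary \ref{C(f)}) shows that this generator can be taken to be a homogeneous coefficient $a_{ij}$ of a polynomial, and that locally $\mathcal{I}R_Q(X)=\A_fR_Q(X)=a_{ij}R_Q(X)$ for a generator $f$ of $\mathcal{I}$. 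The aim is to patch these local homogeneous descriptions into a single global finitely generated homogeneous ideal $J$ of $R$ with $J\NA(R,\star)=\mathcal{I}$. Concretely, if $f_1,\ldots,f_s$ generate $\mathcal{I}$ as an ideal of $R[X]_{N(\star)}$, then $J:=\A_{f_1}+\cdots+\A_{f_s}$ is a natural candidate: it is a finitely generated homogeneous ideal of $R$, and the content computations show $J\NA(R,\star)$ and $\mathcal{I}$ have the same localization at every maximal ideal of $\NA(R,\star)$, hence are equal.

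Once $\mathcal{I}=J\NA(R,\star)$ with $J$ finitely generated homogeneous, invertibility of $\mathcal{I}$ forces, by Lemma \ref{inv}, that $J$ is $\star_f$-invertible. I would then choose a polynomial $f\in R[X]$ whose homogeneous content is $J$, that is $\A_f=J$ (for instance build $f$ so that its coefficients run over a homogeneous generating set of $J$); such an $f$ exists because $J$ is finitely generated and homogeneous. Applying the equivalence $(1)\Leftrightarrow(3)$ of Corollary \ref{C(f)} to this $f$, the $\star_f$-invertibility of $\A_f=J$ yields $\A_f\NA(R,\star)=f\NA(R,\star)$, i.e. $J\NA(R,\star)=f\NA(R,\star)$ is principal. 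Therefore $\mathcal{I}=f\NA(R,\star)$ is principal, as desired.

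The main obstacle I anticipate is the patching step: verifying rigorously that the globally defined homogeneous ideal $J=\sum_i \A_{f_i}$ really extends to $\mathcal{I}$, rather than merely agreeing with it locally in a way that might fail to glue. This requires checking the equality $J\NA(R,\star)=\mathcal{I}$ at each $Q\NA(R,\star)$ using Proposition \ref{NA}(3) together with the content identities $\A_{fg}\subseteq\A_f\A_g$ and the Dedekind--Mertens relation of Proposition \ref{dm}, and then concluding equality of the two ideals from equality of all localizations via \cite[Theorem 4.10]{G}. Care is needed to confirm that the homogeneous-coefficient generator produced at each $Q$ is compatible with a single global $\A_f$, but since the argument of Corollary \ref{C(f)} already handles precisely this local-to-content comparison for a single polynomial, the extension to a finite generating set is routine once the right candidate $J$ is in hand.
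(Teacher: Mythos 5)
Your proof fails at its first and essential step: the claim that an invertible ideal $\mathcal{I}$ of $\NA(R,\star)$ must be extended from a homogeneous ideal of $R$. This is false in general, and in fact it cannot be true under the hypotheses of Theorem \ref{pic}: since principal ideals are invertible, your reduction asserts in particular that every principal ideal of $\NA(R,\star)$ is extended from a homogeneous ideal of $R$, and by Theorem \ref{NKP1} ((3)$\Rightarrow$(1)) that statement already forces $R$ to be a GP$\star$MD --- a hypothesis Theorem \ref{pic} does not make. Concretely, let $R=k[x,y]$ with the trivial grading and $\star=d$, so $\NA(R,d)=R(X)$, and let $f=x+yX$. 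Then $\mathcal{I}=fR(X)$ is invertible (being principal), yet $fR(X)\cap R=0$ (here $f$ is a prime element of $R[X]$, and $f$ divides no polynomial of unit content, so $gs=fh$ with $c(s)=R$ forces $f\mid g$, hence $g=0$); consequently $\mathcal{I}=J\,\NA(R,d)$ is impossible for any nonzero (fractional) ideal $J$ of $R$, since it would give $0\neq J\subseteq \mathcal{I}\cap K\cap R$. The precise point where your local argument breaks is the appeal to ``the Dedekind--Mertens machinery as in Corollary \ref{C(f)}'': applying \cite[Proposition 7.4]{G} to $\mathcal{I}R_Q(X)=(f_1,\ldots,f_s)R_Q(X)$ produces one of the \emph{polynomials} $f_i$ as a local generator, not a homogeneous element of $R$, and upgrading $f_iR_Q(X)$ to $\A_{f_i}R_Q(X)$ is exactly the implication (2)$\Rightarrow$(3) of Corollary \ref{C(f)}, whose hypothesis is the invertibility of $\A_{f_i}\NA(R,\star)$ --- which you do not have. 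What you do know, the invertibility of the principal ideal $f_i\NA(R,\star)$, yields only the inclusion $f_iR_Q(X)\subseteq \A_{f_i}R_Q(X)$, which is strict in general: in the example above, at $Q=(x,y)$ one has $(x+yX)R_Q(X)\subsetneq (x,y)R_Q(X)$, because $(x,y)\NA(R,d)$ is not invertible by Lemma \ref{inv} (as $(x,y)$ is not invertible in $k[x,y]$). Hence your candidate $J=\A_{f_1}+\cdots+\A_{f_s}$ satisfies only $\mathcal{I}\subseteq J\,\NA(R,\star)$, and the patching equality you flagged as the main obstacle genuinely fails.

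The paper's proof avoids any homogeneous-extension claim. Writing $\mathcal{I}=(f_1,\ldots,f_n)\NA(R,\star)$, it forms the single polynomial $f=f_1+f_2X^{\partial f_1+1}+\cdots+f_nX^{\partial f_1+\cdots+\partial f_{n-1}+n-1}$, whose summands occupy disjoint ranges of $X$-degrees, and shows, following the proof of \cite[Theorem 2]{A}, that $(f_1,\ldots,f_n)R_Q(X)=fR_Q(X)$ at every $Q\in h$-$\QMax^{\widetilde{\star}}(R)$, using only that this localization is an invertible, hence principal, ideal of the quasi-local domain $R_Q(X)$; then $\mathcal{I}=f\NA(R,\star)$ by Proposition \ref{NA}(2). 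The generator is a (non-homogeneous) polynomial, and no comparison with $\A_f$ or with ideals of $R$ is needed. Your concluding step (Lemma \ref{inv} plus Corollary \ref{C(f)}) is correct as far as it goes, but it can only be reached when extension from a homogeneous ideal is available, i.e.\ essentially under the GP$\star$MD hypothesis, where it merely reproves the easy implications (1)$\Rightarrow$(4)$\Rightarrow$(5) of Theorem \ref{NKP1}.
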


\begin{proof} Let $A\subseteq\NA(R,\star)$ be an invertible ideal. Then $A$ is finitely generated. Hence $A=(f_1,\ldots,f_n)_{N(\star)}$ for some $f_1,\ldots,f_n\in R[X]$. Let $Q$ be a homogeneous maximal quasi-$\widetilde{\star}$-ideal of $R$. Then $A_{Q[X]_{N(\star)}}=((f_1,\ldots,f_n)_{N(\star)})_{Q[X]_{N(\star)}}=(f_1,\ldots,f_n)R_Q(X)$ is invertible. Let
$$
f:=f_1+f_2X^{\partial f_1+1}+\cdots+f_nX^{\partial f_1+\cdots+\partial f_{n-1}+n-1},
$$
where $\partial f_i$ is the degree of $f_i$ for $i=1,\ldots,n$. Then by the proof of \cite[Theorem 2]{A}, we have $(f_1,\ldots,f_n)R_Q(X)=fR_Q(X)$. Hence $A=f\NA(R,\star)$ and $A$ is principal.
\end{proof}

Let $R=\bigoplus_{\alpha\in\Gamma}R_{\alpha}$ be a graded integral
domain, and $T$ be a homogeneous overring of $R$. Let $\star$ and
$\star'$ be semistar operations on $R$ and $T$, respectively. Recall from \cite{S}
that \emph{$T$ is homogeneously $(\star,\star')$-linked overring of
$R$} if
$$F^{\star}=D^{\star}\Rightarrow (FT)^{\star'}=T^{\star'}$$ for each nonzero homogeneous finitely generated ideal $F$ of
$R$. We say that \emph{$T$ is homogeneously $t$-linked overring of
$R$} if $T$ is homogeneously $(t,t)$-linked overring of $R$. Also it
can be seen that $T$ is homogeneously $(\star,\star')$-linked
overring of $R$ if and only if $T$ is homogeneously
$(\widetilde{\star},\widetilde{\star'})$-linked overring of $R$ (cf.
\cite[Theorem 3.8]{EF}).

\begin{lem}\label{link} Let $R=\bigoplus_{\alpha\in\Gamma}R_{\alpha}$ be a
graded integral domain, and let $T$ be
a homogeneous overring of $R$. Let $\star$ (resp. $\star'$) be a
semistar operation on $R$ (resp. on $T$). Then, $T$ is a
homogeneously $(\star,\star')$-linked overring of $R$ if and only if
$\NA(R,\star)\subseteq \NA(T,\star')$.
\end{lem}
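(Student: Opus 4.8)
The plan is to convert the ring inclusion into a condition on units of $\NA(T,\star')$ and then match that condition term by term with the linking hypothesis through the homogeneous content ideal.

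First I would set up the ambient picture. Since $R\subseteq T\subseteq R_H$, both $R[X]$ and $T[X]$ lie in $R_H[X]$ and share the common quotient field $K(X)$, with $K$ the quotient field of $R$ (equivalently of $T$); thus $\NA(R,\star)$ and $\NA(T,\star')$ are subrings of $K(X)$. By the universal property of localization, $\NA(R,\star)=R[X]_{N(\star)}$ is the subring of $K(X)$ generated by $R[X]$ together with the inverses of the elements of $N(\star)$. Because $R[X]\subseteq T[X]\subseteq\NA(T,\star')$, the inclusion $\NA(R,\star)\subseteq\NA(T,\star')$ holds if and only if every $f\in N(\star)$ is a unit of $\NA(T,\star')$.

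Second I would characterize these units. Using that $N(\star')$ is saturated and multiplicatively closed (the lemma preceding the definition of $\NA$), a polynomial $g\in T[X]$ is a unit of $\NA(T,\star')=T[X]_{N(\star')}$ precisely when $g\in N(\star')$. I would also record that for $f\in R[X]$ the homogeneous content of $f$ computed over $T$ is $\A_f T$: the homogeneous components of each coefficient of $f$ lie in $R$, and by the uniqueness of homogeneous decompositions in $R_H$ together with $T_\alpha=T\cap(R_H)_\alpha$, they remain the homogeneous components after passing to $T[X]$. Hence, for $f\in N(\star)\subseteq R[X]\subseteq T[X]$, being a unit of $\NA(T,\star')$ is equivalent to $(\A_f T)^{\star'}=T^{\star'}$. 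Combining with the previous paragraph, $\NA(R,\star)\subseteq\NA(T,\star')$ if and only if $(\A_f T)^{\star'}=T^{\star'}$ for every $f\in N(\star)$.

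Finally I would settle both implications. For linked $\Rightarrow$ inclusion, each $f\in N(\star)$ has $\A_f$ a nonzero finitely generated homogeneous ideal of $R$ with $\A_f^{\star}=R^{\star}$, so the linking hypothesis applied to $F=\A_f$ gives $(\A_f T)^{\star'}=T^{\star'}$. For inclusion $\Rightarrow$ linked, given a nonzero finitely generated homogeneous ideal $F$ with $F^{\star}=R^{\star}$, I would pick homogeneous generators $a_1,\ldots,a_n$ of $F$ and set $f=a_1+a_2X+\cdots+a_nX^{n-1}$; since each $a_i$ is homogeneous, $C(a_i)=Ra_i$ and so $\A_f=F$, whence $f\in N(\star)$ and the inclusion forces $(FT)^{\star'}=(\A_f T)^{\star'}=T^{\star'}$. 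The only genuinely delicate step is the identification of the homogeneous content over $T$ with $\A_f T$, which is exactly where homogeneity of the overring $T$ is used; the remainder is bookkeeping about localization and the saturation of $N(\star')$, and notably requires no hypothesis such as $R^{\star}\subsetneq R_H$.
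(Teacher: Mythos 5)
Your proof is correct and follows essentially the same route as the paper's: both directions reduce to the observation that $f\in N(\star)$ becomes a unit of $\NA(T,\star')$ exactly when $(\A_fT)^{\star'}=T^{\star'}$, using the identification of the homogeneous content of $f$ over $T$ with $\A_fT$ and, for the converse, a polynomial built from homogeneous generators of $F$. You are in fact more explicit than the paper on the two points it leaves tacit (the saturation/unit characterization and the equality $\A_{fT}=\A_fT$), which is a welcome clarification but not a different argument.
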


\begin{proof} Let $f\in R$ such that $\A_f^{\star}=R^{\star}$.
Then by assumption $\A_{fT}^{\star'}=(\A_fT)^{\star'}=T^{\star'}$.
Hence $\NA(R,\star)\subseteq \NA(T,\star')$. Conversely let
$F$ be a nonzero homogeneous finitely generated ideal of $R$ such
that $F^{\star}=R^{\star}$. We can choose an element $f\in R$ such that $\A_f=F$. From the
fact that $\A_f^{\star}=R^{\star}$, we have that $f$ is a unit in
$\NA(R,\star)$ and so by assumption, $f$ is a unit in
$\NA(T,\star')$. This implies that
$(\A_fT)^{\star'}=\A_{fT}^{\star'}=T^{\star'}$, i.e.,
$(FT)^{\star'}=T^{\star'}$.
\end{proof}

\subsection{Kronecker function ring}

We call the Kronecker function ring with respect to $\star$ by the following:
$$
\Kr(R,\star):=\left\{\frac{f}{g}\bigg| \begin{array}{l} f,g\in R[X],
g\neq0,\text{ and there is }0\neq h\in R[X]
\\\text{ such that } \A_f\A_h\subseteq(\A_g\A_h)^{\star} \end{array} \right\}.
$$
Note that if $R$ has trivial grading, then $\Kr(R,\star)$ coincides
with the usual Kronecker function ring $\mbox{Kr}(R,\star)$,
recalled in the introduction, but they are not the same in general
(see Example \ref{E}).

\begin{lem}\label{Kr} Assume that $\star$ is an \texttt{e.a.b.} semistar
operation. Then
$$
\Kr(R,\star)=\left\{\frac{f}{g}\bigg| \begin{array}{l} f,g\in
R[X],\text{ }g\neq0,\text{ and }\A_f\subseteq \A_g^{\star} \end{array}
\right\}.
$$
and
\begin{enumerate}
\item $\Kr(R,\star)$ is an integral domain.
\item $\Kr(R,\star)$ is a B\'{e}zout domain.
\item $I^{\star}=I\Kr(R,\star)\cap R_H$ for every nonzero finitely
generated homogeneous ideal $I$ of $R$.
\end{enumerate}
\end{lem}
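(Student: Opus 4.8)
The plan is to first establish the alternative description of $\Kr(R,\star)$ displayed in the lemma, and then to read off properties (1)--(3) from it. For the displayed equality, the inclusion $\supseteq$ is immediate: if $\A_f\subseteq\A_g^{\star}$, take $h=1$ (so $\A_h=C(1)=R$) to get $\A_f\A_h=\A_f\subseteq\A_g^{\star}=(\A_g\A_h)^{\star}$. For $\subseteq$, suppose $f/g\in\Kr(R,\star)$ with witness $h$, so $\A_f\A_h\subseteq(\A_g\A_h)^{\star}$; applying $\star$ and using Proposition~\ref{dm} (which gives $\A_{fh}^{\star}=(\A_f\A_h)^{\star}$ and $\A_{gh}^{\star}=(\A_g\A_h)^{\star}$, as $\star$ is \texttt{e.a.b.}) yields $(\A_h\A_f)^{\star}\subseteq(\A_h\A_g)^{\star}$. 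The \texttt{e.a.b.} property applied to the finitely generated homogeneous ideals $\A_h,\A_f,\A_g\in f(R)$ then cancels $\A_h$ and gives $\A_f^{\star}\subseteq\A_g^{\star}$, whence $\A_f\subseteq\A_g^{\star}$. This cancellation is exactly where \texttt{e.a.b.} is indispensable.

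Part (1) follows by checking that $\Kr(R,\star)$ is a subring of the field $K(X)$, hence a domain: it contains $0$ and $1$, is closed under multiplication because $\A_{ff'}\subseteq\A_f\A_{f'}\subseteq\A_g^{\star}\A_{g'}^{\star}\subseteq(\A_g\A_{g'})^{\star}=\A_{gg'}^{\star}$ (using $(E^{\star}F^{\star})^{\star}=(EF)^{\star}$ and Proposition~\ref{dm}), and is closed under addition after writing $f/g+f'/g'=(fg'+f'g)/(gg')$ and bounding $\A_{fg'+f'g}\subseteq\A_{fg'}+\A_{f'g}\subseteq(\A_g\A_{g'})^{\star}=\A_{gg'}^{\star}$. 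For part (2), the key preliminary is $f\Kr(R,\star)=\A_f\Kr(R,\star)$ for every $f\in R[X]$: one direction from $f\in\A_f R[X]\subseteq\A_f\Kr(R,\star)$, the other from the fact that each homogeneous generator $a$ of $\A_f$ has $\A_a=Ra\subseteq\A_f\subseteq\A_f^{\star}$, so $a/f\in\Kr(R,\star)$ and $a\in f\Kr(R,\star)$. Given a finitely generated ideal of $\Kr(R,\star)$, I clear denominators to $(f_1,\dots,f_n)\Kr(R,\star)$ and use the degree-shift trick from the proof of Theorem~\ref{pic} to build $f\in R[X]$ with $\A_f=\A_{f_1}+\dots+\A_{f_n}$; then $(f_1,\dots,f_n)\Kr(R,\star)=\sum_i\A_{f_i}\Kr(R,\star)=\A_f\Kr(R,\star)=f\Kr(R,\star)$ is principal, so $\Kr(R,\star)$ is Bézout.

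For (3), I write $I=\A_f$ for a suitable $f\in R[X]$ with homogeneous coefficients, so that $I\Kr(R,\star)=f\Kr(R,\star)$ by the previous paragraph. The inclusion $I\Kr(R,\star)\cap R_H\subseteq I^{\star}$ is the clean part: for $x\in f\Kr(R,\star)\cap R_H$ write $x=f\,(u/v)$ with $\A_u\subseteq\A_v^{\star}$; then $xv=fu$ gives $(\A_x\A_v)^{\star}=(\A_f\A_u)^{\star}\subseteq(\A_f\A_v)^{\star}$ via Proposition~\ref{dm}, and \texttt{e.a.b.} cancellation yields $\A_x^{\star}\subseteq\A_f^{\star}$, so $x\in\A_x\subseteq I^{\star}$. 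For the reverse inclusion, a \emph{homogeneous} element $x\in I^{\star}\cap R_H$ has $\A_x=Rx\subseteq\A_f^{\star}$, hence $x/f\in\Kr(R,\star)$ and $x\in f\Kr(R,\star)=I\Kr(R,\star)$; one then extends to a general element of $I^{\star}$ by treating its homogeneous components separately.

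The main obstacle I anticipate is precisely this reverse inclusion $I^{\star}\subseteq I\Kr(R,\star)\cap R_H$: unlike the classical case, where intersecting with the quotient field $K$ is automatic, here I must know that $I^{\star}$ is contained in $R_H$ and is a \emph{homogeneous} $R$-submodule, so that an arbitrary $x\in I^{\star}$ decomposes into homogeneous components each lying in $I^{\star}\cap R_H$ and the componentwise argument applies. Establishing this homogeneity of $\A_f^{\star}$ for $f$ with homogeneous coefficients is where the graded hypotheses must enter, through homogeneous-preservation of $\star$, and it is the crux of the proof; by contrast the content manipulations in the other steps are routine once the \texttt{e.a.b.} characterization is in hand.
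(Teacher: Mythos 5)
Your proposal tracks the paper's own proof almost exactly: the same \texttt{e.a.b.} cancellation yields the simplified description of $\Kr(R,\star)$, the same content estimates give closure under sums and products in (1), and your part (2) is the paper's degree-shift argument merely reorganized around the identity $f\Kr(R,\star)=\A_f\Kr(R,\star)$ (the paper shows $(\alpha,\beta)=(\alpha+X^n\beta)$ for $\alpha=f/h$, $\beta=g/h$ with $n>\deg f$, and then invokes exactly your identity at the start of its proof of (3)). Your forward inclusion in (3), via $xv=fu$, Proposition \ref{dm} and \texttt{e.a.b.} cancellation, is likewise the paper's computation (it appears there as the well-definedness check in part (1)). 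All of these steps are correct.

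The step you flag as the crux is a genuine issue, and you are right that it cannot be closed from the stated hypotheses: for a general \texttt{e.a.b.} semistar operation, $I^{\star}$ need not be homogeneous, nor even contained in $R_H$ (for instance $E\mapsto EV$, for a valuation overring $V$ of $R$ with $V\not\subseteq R_H$, is \texttt{e.a.b.}, and already $R^{\star}=V\not\subseteq R_H$). What your argument establishes from \texttt{e.a.b.} alone is the equality $I\Kr(R,\star)\cap R_H=I^{\star}\cap R_H$. You should know, however, that the paper's own proof has the identical gap, only hidden: it asserts that $d/(a_0+a_1X+\cdots+a_nX^n)\in\Kr(R,\star)$ if and only if $(d)\subseteq I^{\star}$, whereas the criterion furnished by the displayed description is $\A_d=C(d)\subseteq I^{\star}$; for a non-homogeneous $d\in R_H$ these conditions agree only when $I^{\star}$ picks up homogeneous components, and the containment $I^{\star}\subseteq R_H$ is never addressed. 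The gap is harmless where the lemma is actually applied (Theorem \ref{p}), because there $\star$ is assumed homogeneous preserving, so $I^{\star}$ is a homogeneous fractional ideal, hence lies in $R_H$ and is closed under taking homogeneous components, and your componentwise extension then goes through. So your proof is as complete as the published one, and your diagnosis of the missing hypothesis is accurate; to make (3) a true standalone statement one should either add that hypothesis or weaken the conclusion to $I^{\star}\cap R_H=I\Kr(R,\star)\cap R_H$.
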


\begin{proof} Assume that $\star$ is an \texttt{e.a.b.} semistar
operation. In this case, for $f,g,h\in R[X]\backslash\{0\}$ we have
$$
\A_f\A_h\subseteq(\A_g\A_h)^{\star}\Leftrightarrow \A_f\subseteq \A_g^{\star}.
$$
Thus $\Kr(R,\star)=\left\{\frac{f}{g}\bigg| \begin{array}{l} f,g\in
R[X],\text{ }g\neq0,\text{ and }\A_f\subseteq \A_g^{\star} \end{array}
\right\}$.

(1) We first show that $\Kr(R,\star)$ is well defined. Assume that $f, g, s, t\in R[X]\backslash\{0\}$, such that $\frac{f}{g}=\frac{s}{t}$ and that $\A_f\subseteq \A_g^{\star}$. We show that $\A_s\subseteq \A_t^{\star}$. We have $ft=gs$. Using Proposition \ref{dm}, we have $\A_{ft}^{\star}=(\A_f\A_t)^{\star}$ and  $A_{gs}^{\star}=(A_gA_s)^{\star}$. So that
$$
(\A_g\A_s)^{\star}=\A_{gs}^{\star}=\A_{ft}^{\star}=(\A_f\A_t)^{\star}\subseteq(\A_g^{\star}\A_t)^{\star}=(\A_g\A_t)^{\star}.
$$
Now since $\star$ is \texttt{e.a.b.}, we obtain that $\A_s\subseteq \A_t^{\star}$.
Let $0\neq f_1, f_2, g_1, g_2\in R[X]$ be such that $\frac{f_1}{g_1}, \frac{f_2}{g_2}\in \Kr(R,\star)$; so $\A_{f_i}\subseteq \A_{g_i}^{\star}$ for $i=1,2$. Then $\frac{f_1}{g_1}+\frac{f_2}{g_2}=\frac{f_1g_2+f_2g_1}{g_1g_2}$ and $(\frac{f_1}{g_1})(\frac{f_2}{g_2})=\frac{f_1f_2}{g_1g_2}$. Note that, by Proposition \ref{dm} we have $\A_{f_1g_2+f_2g_1}\subseteq \A_{f_1g_2}+\A_{f_2g_1}\subseteq (\A_{f_1g_2}+\A_{f_2g_1})^{\star}=(\A_{f_1g_2}^{\star}+\A_{f_2g_1}^{\star})^{\star}=((\A_{f_1}\A_{g_2})^{\star}+(\A_{f_2}\A_{g_1})^{\star})^{\star}
\subseteq(\A_{g_1}\A_{g_2})^{\star}=\A_{g_1g_2}^{\star}$ and $\A_{f_1f_2}\subseteq \A_{f_1f_2}^{\star}=(\A_{f_1}A_{f_2})^{\star}\subseteq (\A_{g_1}\A_{g_2})^{\star}
=\A_{g_1g_2}^{\star}$. Thus $\frac{f_1g_2+f_2g_1}{g_1g_2}$ and $\frac{f_1f_2}{g_1g_2}$ are in $\Kr(R,\star)$. Hence $\Kr(R,\star)$ is an integral domain.

(2) To prove that $\Kr(R,\star)$ is a B\'{e}zout domain, assume that
$\alpha$ and $\beta$ are non-zero elements of $\Kr(R,\star)$. Write
$\alpha=\frac{f}{h}$ and $\beta=\frac{g}{h}$, where $f,g,h\in
R[X]\setminus\{0\}$. Let $n$ be a positive integer greater than the
degree of $f$. We set $\gamma=\alpha+X^n\beta$. We show that
$(\alpha,\beta)=(\gamma)$. By the choice of $n$ ,
$\A_{f+X^ng}=\A_f+\A_g$. Thus
$\frac{\alpha}{\gamma}=\frac{f}{f+X^ng}$ is in $\Kr(R,\star)$ and
similarly $\frac{\beta}{\gamma}=\frac{f}{f+X^ng}$ is in
$\Kr(R,\star)$. Therefore we showed that $(\alpha,\beta)=(\gamma)$.

(3) Assume that $I=(a_0,a_1,\ldots,a_n)$ where $a_i$ is a
homogeneous element of $R$ for $i=0,1\ldots,n$. The proof of part
(2) shows that $I\Kr(R,\star)=(a_0+a_1X+\cdots+a_nX^n)\Kr(R,\star)$.
Therefore for $d\in R_H$, one has $d\in I\Kr(R,\star)$ if and only
if $d/(a_0+a_1X+\cdots+a_nX^n)\in \Kr(R,\star)$; that is, if and
only if $(d)\subseteq
\A_{a_0+a_1X+\cdots+a_nX^n}^{\star}=(a_0,a_1,\ldots,a_n)^{\star}=I^{\star}$
\end{proof}

\begin{thm}\label{p} Let $R=\bigoplus_{\alpha\in\Gamma}R_{\alpha}$ be a graded integral domain, and $\star$ be a semistar operation on $R$ such that $R^{\star}\subsetneq R_H$ and that $\star$ is homogeneous preserving. Then
\begin{enumerate}
\item $\Kr(R,\star)=\Kr(R,\star_a)$.
\item $\Kr(R,\star)$ is a B\'{e}zout domain.
\item $I^{\star_a}=I\Kr(R,\star)\cap R_H$ for every nonzero finitely
generated homogeneous ideal $I$ of $R$.
\end{enumerate}
\end{thm}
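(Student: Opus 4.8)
The plan is to reduce everything to the already-established e.a.b.\ case by invoking Lemma~\ref{Kr} for the operation $\star_a$, then to prove the identity $\Kr(R,\star)=\Kr(R,\star_a)$ of part (1), after which parts (2) and (3) follow immediately. The key observation is that $\star_a$ is an e.a.b.\ semistar operation of finite type, so Lemma~\ref{Kr} applies verbatim to $\star_a$: it gives that $\Kr(R,\star_a)$ is a B\'ezout domain with the simplified description $\Kr(R,\star_a)=\{f/g\mid f,g\in R[X],\ g\neq0,\ \A_f\subseteq\A_g^{\star_a}\}$, and that $I^{\star_a}=I\Kr(R,\star_a)\cap R_H$ for finitely generated homogeneous $I$. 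Thus once (1) is in hand, (2) and (3) are nothing but the corresponding statements of Lemma~\ref{Kr} read off for $\star_a$.

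For part (1) I would argue both inclusions at the level of the defining fractions $f/g$. Take $f/g\in\Kr(R,\star)$, so there is $0\neq h\in R[X]$ with $\A_f\A_h\subseteq(\A_g\A_h)^{\star}$. Since $\star_f\leq\star_a$ (stated in the excerpt) and $\star\leq\star_f$ is immediate, applying $\star_a$ and using idempotence gives $(\A_f\A_h)^{\star_a}\subseteq(\A_g\A_h)^{\star_a}$; because $\star_a$ is e.a.b.\ this yields $\A_f^{\star_a}\subseteq\A_g^{\star_a}$, i.e.\ $\A_f\subseteq\A_g^{\star_a}$, so $f/g\in\Kr(R,\star_a)$ by the simplified description. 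For the reverse inclusion I would use the definition of $\star_a$ directly: if $\A_f\subseteq\A_g^{\star_a}$, then by the very formula $F^{\star_a}=\bigcup\{((FH)^{\star}:H^{\star})\mid H\in f(R)\}$ applied to the finitely generated homogeneous ideal $\A_g$, each generator of $\A_f$ lies in $((\A_g H)^{\star}:H^{\star})$ for some $H\in f(R)$; combining finitely many such $H$ into a single $H$ (replace them by their sum/product) produces one $h\in R[X]$ with $\A_h=H$ and $\A_f\A_h\subseteq(\A_g\A_h)^{\star}$, witnessing $f/g\in\Kr(R,\star)$.

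The step I expect to be the main obstacle is this reverse inclusion, specifically the passage from ``for each generator of $\A_f$ there exists an $H$'' to ``a single $H$ works simultaneously.'' The definition of $\star_a$ only supplies, a priori, a different auxiliary ideal $H$ for each finitely generated subideal; I must check that the relevant containments are preserved when the various $H$'s are amalgamated, using that $\star$ is homogeneous preserving (so that $\A_g^{\star}$ and $(\A_g H)^{\star}$ stay homogeneous and the combinatorics of contents behave well) together with the multiplicativity $\A_{h_1h_2}^{\star}=(\A_{h_1}\A_{h_2})^{\star}$ from Proposition~\ref{dm}. Here the hypothesis that $\star$ is homogeneous preserving is essential: it guarantees that $\star_a$ interacts correctly with the homogeneous content ideals $\A_f$, and it is what makes the simplified e.a.b.\ description of $\Kr(R,\star_a)$ legitimately applicable to contents of polynomials over the graded ring $R$. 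Once the single common $H$ is produced and realized as $\A_h$ for a suitable polynomial $h$, the equality of the two Kronecker function rings is complete, and parts (2) and (3) are immediate transcriptions of Lemma~\ref{Kr} for $\star_a$.
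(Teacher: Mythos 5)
Your overall architecture matches the paper's: reduce (2) and (3) to Lemma~\ref{Kr} applied to the e.a.b.\ finite-type operation $\star_a$, and concentrate on the equality in (1). Your forward inclusion is essentially the paper's argument (one slip: in general $\star_f\leq\star$, not $\star\leq\star_f$; what saves the step is that $\A_g\A_h$ is finitely generated, so $(\A_g\A_h)^{\star}=(\A_g\A_h)^{\star_f}\subseteq(\A_g\A_h)^{\star_a}$, and then the e.a.b.\ property of $\star_a$ gives $\A_f\subseteq\A_g^{\star_a}$).

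The reverse inclusion, however, has a genuine gap, and you have misplaced where the difficulty lies. The amalgamation of several auxiliary ideals into one — the step you flag as the main obstacle — is actually the easy part: if $x_iT_i\subseteq(BT_i)^{\star}$ for each $i$, then with $T:=T_1\cdots T_n$ one gets $x_iT\subseteq(BT_i)^{\star}\,T_1\cdots \widehat{T_i}\cdots T_n\subseteq(BT)^{\star}$, using nothing more than $E^{\star}F\subseteq(EF)^{\star}$. The real obstruction is that the ideals $H$ furnished by the definition of $\star_a$ are \emph{arbitrary} finitely generated fractional ideals of $R$ — there is no reason for them to be homogeneous — whereas every content ideal $\A_h$ of a polynomial $h\in R[X]$ is automatically homogeneous. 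So the ideal you obtain by amalgamating the $H$'s cannot, in general, be realized as $\A_h$, and your verification of membership in $\Kr(R,\star)$ breaks down at exactly that point. Your parenthetical justification — that homogeneous preservation makes $(\A_g H)^{\star}$ ``stay homogeneous'' — is false when $H$ is not homogeneous: the hypothesis only concerns images of homogeneous ideals. The paper closes this gap with a dedicated homogenization step: after scaling each $H_i$ into $R$ (giving $F_i=r_iH_i\subseteq R$ with $x_iF_i\subseteq(BF_i)^{\star}$), it invokes \cite[Lemma 3.2]{S} to replace each $F_i$ by a \emph{homogeneous} $T_i\in f(R)$ still satisfying $x_iT_i\subseteq(BT_i)^{\star}$, and only then forms the product $T$ and chooses $h$ with $\A_h=T$. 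That homogenization lemma is precisely where the hypothesis that $\star$ is homogeneous preserving does its work, and it (or some substitute for it) is what your proposal is missing.
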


\begin{proof} It it clear from the definition that $\Kr(R,\star)=\Kr(R,\star_f)$. Thus
we can assume, without loss of generality,
that $\star$ is a semistar operation of finite type.

Parts (2) and (3) are direct consequences of (1) using Lemma
\ref{Kr}.

For the proof of (1) assume that $\star$ is an \texttt{e.a.b.}
semistar operation of finite type. So that in this case
$\star=\star_a$ by \cite[Proposition 4.5(5)]{FL2} and hence (1) is
true. For the general case let $\star$ be a semistar operation of
finite type on $R$. By definition it is easy to see that, given two
semistar operations on $R$ with $\star_1\leq\star_2$, then
$\Kr(R,\star_1)\subseteq\Kr(R,\star_2)$. Using \cite[Proposition
4.5(3)]{FL2} we have $\star\leq\star_a$. Therefore
$\Kr(R,\star)\subseteq\Kr(R,\star_a)$. Conversely let
$\frac{f}{g}\in\Kr(R,\star_a)$. Then, $\A_f\subseteq
\A_g^{\star_a}$. Set $A:=\A_f$ and $B:=\A_g$. Then $A\subseteq
B^{\star_a}=\bigcup\{((BH)^{\star}:H)\mid H\in f(R)\}$. Suppose that
$A$ is generated by homogeneous elements $x_1,\cdots,x_n\in R$. Then
there is $H_i\in f(R)$, such that $x_iH_i\subseteq (BH_i)^{\star}$
for $i=1,\cdots,n$. Choose $0\neq r_i\in R$ such that
$F_i=r_iH_i\subseteq R$. Thus $x_iF_i\subseteq (BF_i)^{\star}$.
Therefore \cite[Lemma 3.2]{S} gives a homogeneous $T_i\in f(R)$ such
that $x_iT_i\subseteq (BT_i)^{\star}$. Set $T:=T_1T_2\cdots T_n$
which is a finitely generated homogeneous fractional ideal of $R$
such that $AT\subseteq(BT)^{\star}$. Now we can find an element
$h\in R[X]$ such that $\A_h=T$. Then
$\A_f\A_h\subseteq(\A_g\A_h)^{\star}$. This means that
$\frac{f}{g}\in\Kr(R,\star)$ to complete the proof of (1).
\end{proof}

\begin{rem}
{\em We note that the above theorem is the only result that we need
the semistar operation $\star$ is homogeneous preserving. It is not
clear for us whether, in general, this hypothesis is crucial.
However, since $\NA(R,\star)=\NA(R,\widetilde{\star})$ and
$\widetilde{\star}$ is homogeneous preserving for any semistar
operation $\star$ \cite[Proposition 2.3]{S}, we do not need to
assume that $\star$ is homoheneous preserving in this case. Also in
working with Kronecker function rings, we only need to use
$\Kr(R,\widetilde{\star})$ rather than $\Kr(R,\star)$. These are why
we do not need to assume that the semistar operations are
homogeneous preserving in most results.}
\end{rem}

\section{Graded P$\star$MDs}

In this section we make use of the Nagata ring $\NA(R,\star)$ and
the Kronecker function ring $\Kr(R,\star)$ with respect to $\star$
to give new characterizations of GP$\star$MDs.

The following theorem generalizes \cite[Corollary 28.6]{G} and \cite[Theorem 4.2]{S}.

\begin{thm}\label{cp} Let $R=\bigoplus_{\alpha\in\Gamma}R_{\alpha}$ be a
graded integral domain. Then $R$ is a graded-Pr\"{u}fer domain if
and only if $\A_f\A_g=\A_{fg}$ for all $f, g\in R[X]$.
\end{thm}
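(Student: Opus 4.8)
The plan is to establish both implications, with the forward direction an immediate consequence of the graded Dedekind--Mertens lemma and the reverse direction carrying essentially all the difficulty.

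For the forward direction, suppose $R$ is a graded-Pr\"{u}fer domain (i.e.\ a GP$d$MD) and fix $f,g\in R[X]$. If $f=0$ the identity is trivial, so assume $f\neq0$; then $\A_f$ is a nonzero finitely generated homogeneous ideal, hence invertible by the very definition of a graded-Pr\"{u}fer domain. By Proposition~\ref{dm} there is an integer $m\geq1$ with $\A_f^{m+1}\A_g=\A_f^m\A_{fg}$. Since $\A_f$ is invertible so is $\A_f^m$, and cancelling it gives $\A_f\A_g=\A_{fg}$. (The inclusion $\A_{fg}\subseteq\A_f\A_g$ always holds, so this is a genuine equality.)

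For the reverse direction, assume $\A_{fg}=\A_f\A_g$ for all $f,g\in R[X]$; I must show every nonzero finitely generated homogeneous ideal of $R$ is invertible. Each such ideal admits homogeneous generators $a_0,\dots,a_n$, hence equals $\A_f$ for $f=\sum_i a_iX^i$, so it suffices to prove that $\A_f$ is invertible for every nonzero $f$; equivalently, by Corollary~\ref{C(f)}, that $\A_f\NA(R,d)=f\NA(R,d)$. I would reduce to a local statement: by \cite[Theorem 4.4]{S} applied with $\star=d$ (so $\widetilde d=d$ and $h$-$\QMax^{d}(R)$ is the set of homogeneous maximal ideals), $R$ is graded-Pr\"{u}fer if and only if $T:=R_{H\setminus P}$ is graded-Pr\"{u}fer for every homogeneous maximal ideal $P$. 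Because $H\setminus P$ consists of homogeneous elements, $T$ is again graded and its homogeneous content is stable under this localization, so the content formula descends to $T$. Thus I may assume $R$ is graded-local with homogeneous maximal ideal $\fm$, and the goal becomes: every homogeneous content $\A_f$ is principal, i.e.\ any two nonzero homogeneous elements of $R$ are comparable under divisibility.

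The step I expect to be the real obstacle is extracting this comparability from the content identities: it is the characteristic-free, graded analogue of the fact that a Gaussian polynomial over a domain has invertible content, and it is the substitute for Gilmer's valuation-domain computation in \cite[Corollary 28.6]{G}. The useful identities come from mixing coefficients of equal degree; for nonzero homogeneous $a,b$ of the \emph{same} degree, applying the hypothesis to $f=a+bX$ and $g=b+aX$ yields, since $a^2+b^2$ is then homogeneous,
\[
(ab,\,a^2+b^2)=\A_{fg}=\A_f\A_g=(a,b)^2=(a^2,ab,b^2),
\]
a genuine constraint in the graded-local domain $R$. One must use identities of this type (equivalently the full force of content-multiplicativity) rather than the weaker relation $(a,b)^2=(a^2,b^2)$ produced by $f=a+bX,\ g=a-bX$, which holds for \emph{arbitrary} degrees but need not imply invertibility. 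For homogeneous elements of different degrees the homogeneous components of the relevant products separate out and give no information directly, so one must instead feed the content formula test polynomials whose coefficients have matching degrees and descend the resulting comparabilities; the care here is exactly that $\A_{fg}$ can strictly exceed the ordinary content and that no single identity suffices. Once every pair of homogeneous elements of each $T=R_{H\setminus P}$ is shown to be comparable, these localizations are graded valuation domains, and \cite[Theorem 4.4]{S} returns that $R$ is a graded-Pr\"{u}fer domain, completing the argument.
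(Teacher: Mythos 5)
Your forward direction is correct and is exactly the paper's argument: Proposition \ref{dm} gives $\A_f^{m+1}\A_g=\A_f^m\A_{fg}$, and invertibility of $\A_f^m$ cancels it. The reverse direction, however, has a genuine gap: after reducing to the graded-local case you explicitly leave the decisive step --- extracting comparability of homogeneous elements from the content identities --- unproven, calling it ``the real obstacle.'' A reduction plus an acknowledged obstacle is not a proof, so the implication $(\Leftarrow)$ is not established by your proposal.

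Moreover, the route you dismiss is precisely the one that works. You write that the relation $(a,b)^2=(a^2,b^2)$ (obtained from $f=a+X^nb$, $g=a-X^nb$, valid for homogeneous $a,b$ of \emph{arbitrary} degrees) ``need not imply invertibility''; that is true in isolation, but you overlook that the content hypothesis also yields integral closedness, and the two together suffice. Concretely, the paper argues: (i) restricting the identity to $R_0[X]$ gives $c(f)c(g)=c(fg)$ for all $f,g\in R_0[X]$, so $R_0$ is Pr\"{u}fer by \cite[Corollary 28.6]{G}, in particular integrally closed; (ii) applying the identity to constant polynomials gives $C(x)C(y)=C(xy)$ for all $x,y\in R$, whence $R$ is integrally closed by \cite[Corollary 3.6]{AA}; (iii) integral closedness together with $(a,b)^2=(a^2,b^2)$ for all homogeneous $a,b$ is exactly the hypothesis of \cite[Proposition 4.1]{S}, which concludes that $R$ is graded-Pr\"{u}fer. (This is the graded analogue of the classical fact that an integrally closed domain in which $(a,b)^2=(a^2,b^2)$ for all $a,b$ is Pr\"{u}fer.) So no same-degree trick, localization, or comparability analysis is needed; the ingredient missing from your attempt is the derivation of integral closedness from the hypothesis. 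A minor further point: your use of Corollary \ref{C(f)} and $\NA(R,d)$ tacitly requires $R\subsetneq R_H$, a hypothesis Theorem \ref{cp} does not impose (the case $R=R_H$ is trivial but would need to be noted); the paper's argument avoids that machinery altogether.
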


\begin{proof} $(\Rightarrow)$ Let $f, g\in R[X]$. Then by Proposition
\ref{dm}, there exists some positive integer $m$ such that
$\A_f^{m+1}\A_g=\A_f^m\A_{fg}$. Now since $R$ is a graded-Pr\"{u}fer
domain, the homogeneous fractional ideal $\A_f^m$ is invertible.
Thus $\A_f\A_g=\A_{fg}$.

$(\Leftarrow)$ Assume that $\A_f\A_g=\A_{fg}$ for all $f, g\in
R[X]$. Thus in particular $c(f)c(g)=c(fg)$ for all $f,g\in R_0[X]$.
Hence $R_0$ is a Pr\"{u}fer domain by \cite[Corollary 28.6]{G} and
in particular is integrally closed. On the other hand we have
$C(x)C(y)=C(xy)$ for all $x,y\in R$. Therefore $R$ is integrally
closed by \cite[Corollary 3.6]{AA}. Now let $a, b\in H$ be
arbitrary. So that $\A_{a+X^nb}=(a,b)$ for some integer $n\geq1$.
Since $(a+X^nb)(a-X^nb)=a^2-(X^nb)^2$, we have
$(a,b)(a,-b)=(a^2,-b^2)$. Consequently $(a,b)^2=(a^2,b^2)$. Thus by
\cite[Proposition 4.1]{S}, we see that $R$ is a graded-Pr\"{u}fer
domain.
\end{proof}

As we saw in the above proof, if
$R=\bigoplus_{\alpha\in\Gamma}R_{\alpha}$ is a graded-Pr\"{u}fer
domain, then $R_0$ is a Pr\"{u}fer domain.

The following theorem generalizes \cite[Theorem 3.7]{Ch}, \cite[Theorem 1.1]{AFZ} and \cite[Theorem 4.5]{S}.

\begin{thm}\label{hhh} Let $R=\bigoplus_{\alpha\in\Gamma}R_{\alpha}$ be a
graded integral domain, and $\star$ be
a semistar operation on $R$ such that $R^{\star}\subsetneq R_H$.
Then $R$ is a GP$\star$MD if and only if
$(\A_f\A_g)^{\widetilde{\star}}=\A_{fg}^{\widetilde{\star}}$ for all
$f, g\in R_H[X]$.
\end{thm}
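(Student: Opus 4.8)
The plan is to prove the two implications separately, in both cases reducing matters to the graded Dedekind--Mertens identity of Proposition \ref{dm}, which for nonzero $f,g\in R_H[X]$ supplies an integer $m\geq 1$ with $\A_f^m(\A_f\A_g)=\A_f^m\A_{fg}$. Throughout I will freely use that the notions GP$\star$MD and GP$\widetilde{\star}$MD coincide, so that ``$R$ is a GP$\star$MD'' means every nonzero finitely generated homogeneous ideal of $R$ is $\widetilde{\star}$-invertible (recall $\widetilde{\star}$ is of finite type), together with the stability relation: by Proposition \ref{N} one has $E^{\widetilde{\star}}\subseteq ER_{H\setminus P}$ for every $P\in h\text{-}\QMax^{\widetilde{\star}}(R)$ and every finitely generated homogeneous fractional ideal $E$, whence $E^{\widetilde{\star}}R_{H\setminus P}=ER_{H\setminus P}$.

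For the forward implication, assume $R$ is a GP$\star$MD and fix nonzero $f,g\in R_H[X]$. After scaling by a homogeneous element to clear denominators, $\A_f$ is a nonzero finitely generated homogeneous (fractional) ideal, hence $\widetilde{\star}$-invertible; consequently so is the power $\A_f^m$. Applying $\widetilde{\star}$ to the Dedekind--Mertens identity gives $(\A_f^m(\A_f\A_g))^{\widetilde{\star}}=(\A_f^m\A_{fg})^{\widetilde{\star}}$, and I then cancel the $\widetilde{\star}$-invertible factor $J:=\A_f^m$ using the standard semistar manipulation $(E(JJ^{-1})^{\widetilde{\star}})^{\widetilde{\star}}=(EJJ^{-1})^{\widetilde{\star}}$ together with $(JJ^{-1})^{\widetilde{\star}}=R^{\widetilde{\star}}$. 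This yields $(\A_f\A_g)^{\widetilde{\star}}=\A_{fg}^{\widetilde{\star}}$, the case $fg=0$ being trivial.

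For the converse, assume $(\A_f\A_g)^{\widetilde{\star}}=\A_{fg}^{\widetilde{\star}}$ for all $f,g\in R_H[X]$ and fix $P\in h\text{-}\QMax^{\widetilde{\star}}(R)$. Extending the identity to $R_{H\setminus P}$ and using the stability relation recorded above, I obtain $\A_f\A_gR_{H\setminus P}=\A_{fg}R_{H\setminus P}$ for all $f,g\in R_H[X]$, hence in particular for all $f,g\in R_{H\setminus P}[X]\subseteq R_H[X]$. Since $R_{H\setminus P}$ is a graded integral domain whose homogeneous content of a polynomial is exactly $\A_{(\cdot)}R_{H\setminus P}$, this says precisely that the ordinary content formula holds in $R_{H\setminus P}[X]$. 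Theorem \ref{cp} then shows $R_{H\setminus P}$ is a graded-Pr\"{u}fer domain, and as this holds for every $P\in h\text{-}\QMax^{\widetilde{\star}}(R)$, \cite[Theorem 4.4]{S} gives that $R$ is a GP$\star$MD.

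The main obstacle is the cancellation step in the forward direction: one must pass from a global identity between $\widetilde{\star}$-closures to the desired equality, and the legitimacy of deleting the factor $\A_f^m$ rests on its $\widetilde{\star}$-invertibility together with the stability of $\widetilde{\star}$. I would verify carefully that products of $\widetilde{\star}$-invertible ideals are again $\widetilde{\star}$-invertible (so that $\A_f^m$ is invertible) and that the cancellation identity applies to the fractional ideals $\A_f$ arising from $R_H[X]$ rather than only to integral ideals; the backward direction, by contrast, is essentially a localization argument once Theorem \ref{cp} and \cite[Theorem 4.4]{S} are in hand.
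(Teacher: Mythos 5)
Your proof is correct and takes essentially the same route as the paper's: the forward direction applies Proposition \ref{dm} and cancels the $\widetilde{\star}$-invertible factor $\A_f^m$, and the converse localizes at each $P\in h\text{-}\QMax^{\widetilde{\star}}(R)$, strips off $\widetilde{\star}$ by stability, and concludes via Theorem \ref{cp} together with \cite[Theorem 4.4]{S}. The only cosmetic difference is the source of the stability relation $E^{\widetilde{\star}}R_{H\setminus P}=ER_{H\setminus P}$: the paper cites \cite[Proposition 2.6]{S} directly, whereas you route it through Proposition \ref{N} (which rests on the same fact), so there is no gap.
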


\begin{proof} $(\Rightarrow)$ Let $f, g\in R_H[X]$. Choose a positive integer $m$ such that
$\A_f^{m+1}\A_g=\A_f^m\A_{fg}$ by Proposition \ref{dm}. Thus
$(\A_f^{m+1}\A_g)^{\widetilde{\star}}=(\A_f^m\A_{fg})^{\widetilde{\star}}$.
Since $R$ is a GP$\star$MD, the homogeneous fractional ideal
$\A_f^m$ is ${\widetilde{\star}}$-invertible. Thus
$(\A_f\A_g)^{\widetilde{\star}}=\A_{fg}^{\widetilde{\star}}$.

$(\Leftarrow)$ Assume that
$(\A_f\A_g)^{\widetilde{\star}}=\A_{fg}^{\widetilde{\star}}$ for all
$f, g\in R_H[X]$. Let $P\in h$-$\QMax^{\widetilde{\star}}(R)$. Then
by \cite[Proposition 2.6]{S}, we have $$\A_fR_{H\backslash
P}\A_gR_{H\backslash P}=\A_f\A_gR_{H\backslash
P}=(\A_f\A_g)^{\widetilde{\star}}R_{H\backslash
P}=\A_{fg}^{\widetilde{\star}}R_{H\backslash
P}=\A_{fg}R_{H\backslash P}.$$ The second and forth equality follow
from \cite[Proposition 2.6]{S}. Thus Theorem \ref{cp} shows that
$R_{H\backslash P}$ is a graded-Pr\"{u}fer domain. Now \cite[Theorem
4.4]{S}, implies that $R$ is a GP$\star$MD.
\end{proof}

It is a natural question that when
$(\A_f\A_g)^{\star}=\A_{fg}^{\star}$ for all $f, g\in R_H[X]$ and a
semistar operation $\star$. In Theorem \ref{hhh}, we have the answer
for $\widetilde{\star}$. In the following result we give an answer
for the case of (semi)star operations and borrow the technique from
\cite[Theorem 1.6]{AK}. Recall that the $b$-semistar operation on
$R$ is defined as $F^b:=\bigcap FV_{\alpha}$ for $V_{\alpha}$
varying in the set of all valuation overrings of $R$, and for $F\in
\overline{\mathcal{F}}(R)$.

\begin{thm}\label{in} Let $R=\bigoplus_{\alpha\in\Gamma}R_{\alpha}$ be a graded integral domain. Then the following are equivalent.
\begin{enumerate}
\item $R$ is integrally closed.
\item $(\A_f\A_g)^v=\A_{fg}^v$ for all $f,g\in R[X]\setminus\{0\}$.
\item $(\A_f\A_g)^{\star}=\A_{fg}^{\star}$  for all nonzero $f, g\in R_H[X]$ with $f$ linear and $\star$ some (semi)star operation on $R$.
\item $fR_H[X]\cap R[X]=f\A_f^{-1}R[X]$ for all nonzero $f\in R[X]$.
\end{enumerate}
\end{thm}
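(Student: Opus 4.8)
The plan is to establish the cycle $(1)\Rightarrow(2)\Rightarrow(3)\Rightarrow(1)$ together with the separate equivalence $(2)\Leftrightarrow(4)$. One bookkeeping device is used throughout: any content identity stated over $R[X]$ transports to $R_H[X]$. Indeed, given $f,g\in R_H[X]$ we may pick homogeneous $s,t\in H$ with $sf,tg\in R[X]$; since $\A_{sf}=s\A_f$, $\A_{stfg}=st\A_{fg}$, and the $v$-operation satisfies $(xE)^v=xE^v$, each of the formulas in $(2)$ holds verbatim over $R_H[X]$ after cancelling the homogeneous factors $s,t$.

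For $(1)\Rightarrow(2)$ I would work with the $b$-operation. Being an intersection over valuation overrings, $b$ is an \texttt{e.a.b.} semistar operation, so the ``in particular'' part of Proposition~\ref{dm} gives $(\A_f\A_g)^b=\A_{fg}^b$ for all $f,g\in R_H[X]$, with no hypothesis on $R$. Now $R^b=\bigcap V_\alpha$ is the integral closure of $R$, so $(1)$ is precisely the statement $R^b=R$, i.e. that $b$ is a (semi)star operation; hence $b\le v$, and by the reformulation of $\le$ recalled in the introduction we have $(E^b)^v=E^v$ for every $E$. Applying $(-)^v$ to the $b$-content formula yields $(\A_f\A_g)^v=\bigl((\A_f\A_g)^b\bigr)^v=(\A_{fg}^b)^v=\A_{fg}^v$, which is $(2)$.

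The implication $(2)\Rightarrow(3)$ is immediate: take $\star=v$, restrict to linear $f$, and use the transport remark to pass to $R_H[X]$. The heart of the cycle is $(3)\Rightarrow(1)$, and this is the step I expect to be the main obstacle. By the graded integral-closedness criterion of \cite{AA} it suffices to show that every homogeneous $u\in R_H$ integral over $R$ already lies in $R$. Fix such a $u$, of degree $\delta$; comparing homogeneous components in an integral equation we may take it monic with homogeneous coefficients, $u^n+r_1u^{n-1}+\cdots+r_n=0$ with $r_i\in R_{i\delta}$. Set $f:=X-u$, a \emph{linear} polynomial in $R_H[X]$, and let $g:=X^{n-1}+s_1X^{n-2}+\cdots+s_{n-1}\in R_H[X]$ be the cofactor, so that $fg=p:=X^n+r_1X^{n-1}+\cdots+r_n\in R[X]$. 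The delicate point is the behaviour of the homogeneous contents: $\A_f=C(1)+C(u)=R+Ru$ \emph{contains} $u$ among its generators, while $1\in\A_g$ and $\A_{fg}=\A_p=R$ because $p$ is monic. Hence $u\in\A_f\subseteq\A_f\A_g$, and $(3)$ forces $u\in\A_f\A_g\subseteq(\A_f\A_g)^{\star}=\A_{fg}^{\star}=R^{\star}=R$. The reason this requires care is that a careless reading makes $\A_f$ look like $R$; the trick is that $\A_f$ is a \emph{fractional} ideal strictly larger than $R$ which captures $u$, whereas the companion $g$ keeps $\A_{fg}=R$.

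Finally, for $(2)\Leftrightarrow(4)$: for $(2)\Rightarrow(4)$ write $p\in fR_H[X]\cap R[X]$ as $p=fq$ with $q\in R_H[X]$; then $(\A_f\A_q)^v=\A_{fq}^v=\A_p^v\subseteq R$, so $\A_f\A_q\subseteq R$, i.e. $\A_q\subseteq\A_f^{-1}$ and $p\in f\A_f^{-1}R[X]$ (the reverse inclusion being automatic from $\A_f\A_f^{-1}\subseteq R$). For $(4)\Rightarrow(2)$ it suffices to show $\A_{fg}^{-1}\subseteq(\A_f\A_g)^{-1}$, which together with $\A_{fg}\subseteq\A_f\A_g$ gives $\A_{fg}^{-1}=(\A_f\A_g)^{-1}$ and hence $\A_{fg}^v=(\A_f\A_g)^v$. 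Given a homogeneous $d$ with $d\A_{fg}\subseteq R$, the polynomial $h:=df\in R_H[X]$ satisfies $gh=d(fg)\in R[X]$; applying $(4)$ to $g\in R[X]$ yields $h\in\A_g^{-1}R[X]$, that is $\A_g\,d\A_f=\A_g\A_h\subseteq R$, so $d\in(\A_f\A_g)^{-1}$. As $\A_{fg}^{-1}$ is homogeneous, checking on homogeneous $d$ suffices.
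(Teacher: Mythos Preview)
Your proof is correct and follows essentially the same route as the paper's: the same $b$-operation argument for $(1)\Rightarrow(2)$, the same linear-factor trick $p=(X-u)g$ for $(3)\Rightarrow(1)$, and the same content computations for $(2)\Leftrightarrow(4)$. The only cosmetic differences are that in $(3)\Rightarrow(1)$ you reduce to homogeneous $u$ (the paper treats arbitrary $\alpha\in R_H$ directly, which works without the graded-integral-closure reduction since $\alpha\in C(\alpha)\subseteq\A_{X-\alpha}$ already), and in $(4)\Rightarrow(2)$ you apply $(4)$ once to $g$ element-wise while the paper applies it to both $f$ and $fg$ to get the containment $fg\A_{fg}^{-1}R[X]\subseteq f\A_f^{-1}R[X]$ directly.
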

\begin{proof} $(1)\Rightarrow(2)$ Let $f,g\in R[X]\setminus\{0\}$. Since $b$ is an \texttt{e.a.b.} (semi)star operation on $R$, we have $(\A_f\A_g)^b=\A_{fg}^b$ by Proposition \ref{dm}. Now since $b\leq v$ we have $(\A_f\A_g)^v=((\A_f\A_g)^b)^v=(\A_{fg}^b)^v=\A_{fg}^v$.

$(2)\Rightarrow(3)$ Is clear.

$(3)\Rightarrow(1)$ Let $\alpha\in R_H$ be integral over $R$ and let
$p(X)\in R[X]$ be a monic polynomial with $p(\alpha)=0$. Hence there
is a polynomial $g\in R_H[X]$ such that $p(X)=(X-\alpha)g(X)$. Now
$\alpha\in
(\A_{(X-\alpha)}\A_g)^{\star}=(\A_{(X-\alpha)g})^{\star}=\A_p^{\star}=R^{\star}=R$.
So that $R$ is integrally closed in $R_H$. Therefore by
\cite[Proposition 5.4]{AA2} $R$ is integrally closed.

$(2)\Rightarrow(4)$ For any graded integral domain $R$ and $0\neq
f\in R[X]$ we have $f\A_f^{-1}R[X]\subseteq fR_H[X]\cap R[X]$. Let
$0\neq g\in R_H[X]$ with $fg\in fR_H[X]\cap R[X]$. Then
$\A_f\A_g\subseteq (\A_f\A_g)^v=\A_{fg}^v\subseteq R$ so
$\A_g\subseteq \A_f^{-1}$ and hence $fg\in f\A_f^{-1}R[X]$.

$(4)\Rightarrow(2)$ We have $f\A_f^{-1}R[X]=fR_H[X]\cap
R[X]\supseteq fgR_H[X]\cap R[X]=fg\A_{fg}^{-1}R[X]$. Now
$fg(\A_{fg})^{-1}R[X]\subseteq f\A_f^{-1}R[X]$ gives
$g(\A_{fg})^{-1}R[X]\subseteq\A_f^{-1}R[X]$ and hence
$\A_g(\A_{fg})^{-1}\subseteq\A_f^{-1}$. So
$\A_f\A_g(\A_{fg})^{-1}\subseteq\A_f\A_f^{-1}\subseteq R$ and hence
$(\A_{fg})^{-1}\subseteq(\A_f\A_g)^{-1}$. Thus
$(\A_f\A_g)^v\subseteq\A_{fg}^v$ and so $(\A_f\A_g)^v=\A_{fg}^v$.
\end{proof}

\begin{cor} Let $R=\bigoplus_{\alpha\in\Gamma}R_{\alpha}$ be an integrally closed
graded integral domain. Then $R$ is a graded-Pr\"{u}fer domain if
and only if every nonzero finitely generated homogeneous ideal of
$R$ is a $v$-ideal.
\end{cor}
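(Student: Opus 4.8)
The plan is to prove the corollary by using Theorem~\ref{in} to rewrite the ``$v$-ideal'' hypothesis in terms of content formulas, and then to invoke Theorem~\ref{cp} to connect content formulas with the graded-Pr\"ufer property. Since $R$ is assumed integrally closed, Theorem~\ref{in} is available with $\star=v$ (taking $v$ as the relevant (semi)star operation): equivalence $(1)\Leftrightarrow(2)$ tells us that $(\A_f\A_g)^v=\A_{fg}^v$ holds for all nonzero $f,g\in R[X]$ automatically. This is the key leverage: the full $v$-content formula is free of charge once we know $R$ is integrally closed.

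For the forward direction, suppose $R$ is a graded-Pr\"ufer domain. By Theorem~\ref{cp} we have $\A_f\A_g=\A_{fg}$ for all $f,g\in R[X]$. I must show every nonzero finitely generated homogeneous ideal $I$ is a $v$-ideal, i.e. $I^v=I$. Given such an $I=(a_0,\ldots,a_n)$ with the $a_i$ homogeneous, I would realize $I$ as a homogeneous content ideal $\A_g$ for a suitable $g=a_0+a_1X+\cdots+a_nX^n\in R[X]$ (this is the standard device already used in Lemma~\ref{Kr}(3) and Theorem~\ref{pic}). In a graded-Pr\"ufer domain every nonzero finitely generated homogeneous ideal is invertible, hence in particular a $v$-ideal, since invertible fractional ideals are always divisorial. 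This direction is essentially immediate.

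For the converse, assume every nonzero finitely generated homogeneous ideal of $R$ is a $v$-ideal, and aim to apply Theorem~\ref{cp} by establishing $\A_f\A_g=\A_{fg}$ for all $f,g\in R[X]$. Because $R$ is integrally closed, Theorem~\ref{in}$(1)\Rightarrow(2)$ gives $(\A_f\A_g)^v=\A_{fg}^v$. The inclusion $\A_{fg}\subseteq\A_f\A_g$ always holds (as computed in the preamble). So it suffices to upgrade the $v$-equality to an honest equality. Here is where the hypothesis enters: $\A_f\A_g$ is a nonzero finitely generated homogeneous ideal, hence equals its own $v$-closure, so $\A_f\A_g=(\A_f\A_g)^v=\A_{fg}^v$; and likewise $\A_{fg}$ is finitely generated homogeneous, so $\A_{fg}=\A_{fg}^v$. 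Combining these two identities yields $\A_f\A_g=\A_{fg}$, and Theorem~\ref{cp} then forces $R$ to be graded-Pr\"ufer.

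The main obstacle to watch is the verification that the relevant content products are genuinely finitely generated homogeneous ideals to which the hypothesis applies, and that ``$v$-ideal'' is being used consistently for fractional ideals of this form; one must confirm that $\A_f$, $\A_g$, their product $\A_f\A_g$, and $\A_{fg}$ are all finitely generated homogeneous (which follows from the remarks in the preamble showing $\A_h$ is always a homogeneous finitely generated ideal, together with the fact that a product of two such ideals is again finitely generated and homogeneous). Once the bookkeeping that each object lies in the class covered by the hypothesis is confirmed, the argument is a short chain of equalities, so I would expect the proof to be quite brief, with all the real content delegated to Theorems~\ref{cp} and~\ref{in}.
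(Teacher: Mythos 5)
Your proposal is correct and takes essentially the same approach as the paper: the paper's proof is precisely your chain $\A_f\A_g=(\A_f\A_g)^v=\A_{fg}^v=\A_{fg}$ (hypothesis applied to the finitely generated homogeneous ideals $\A_f\A_g$ and $\A_{fg}$, combined with Theorem~\ref{in}), followed by Theorem~\ref{cp}. The only cosmetic difference is that the paper leaves the forward direction unstated, exactly because, as you note, in a graded-Pr\"{u}fer domain finitely generated homogeneous ideals are invertible and hence divisorial.
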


\begin{proof} By hypothesis and Theorem \ref{in}, we
have $\A_f\A_g=(\A_f\A_g)^v=\A_{fg}^v=\A_{fg}$ for all $f, g\in
R[X]$. Thus by Theorem \ref{cp}, $R$ is a graded-Pr\"{u}fer domain.
\end{proof}

We now recall the notion of $\star$-valuation overring (a notion due
essentially to P. Jaffard \cite[page 46]{Jaf}). For a domain $D$ and
a semistar operation $\star$ on $D$, we say that a valuation
overring $V$ of $D$ is a \emph{$\star$-valuation overring of $D$}
provided $F^{\star}\subseteq FV$, for each $F\in f(D)$.

\begin{rem}\label{r} {\em (1) Let $\star$ be a semistar operation on a graded integral domain
$R=\bigoplus_{\alpha\in\Gamma}R_{\alpha}$. Recall that for each
$F\in f(R)$ we have
$$
F^{\star_a}=\bigcap\lbrace FV\mid V\text{ is a
}\star\text{-valuation overring of }R\rbrace,
$$
by \cite[Propositions 3.3 and 3.4 and Theorem 3.5]{FL1}.

(2) We have $N(\star)=N((\widetilde{\star})_a)$. Indeed, since
$\widetilde{\star}\leq(\widetilde{\star})_a$ by \cite[Proposition
4.5]{FL2}, we have $N(\star)=N(\widetilde{\star})\subseteq
N((\widetilde{\star})_a)$. Now if $f\in R[X]\backslash N(\star)$
then, $\A_f^{\widetilde{\star}}\subsetneq R^{\widetilde{\star}}$.
Thus there is a homogeneous quasi-$\widetilde{\star}$-prime ideal
$P$ of $R$ such that $\A_f\subseteq P$. Let $V$ be a valuation
domain dominating $R_P$ with maximal ideal $M$ \cite[Corollary
19.7]{G}. Therefore $V$ is a $\widetilde{\star}$-valuation overring
of $R$ by \cite[Theorem 3.9]{FL}, and $\A_fV\subseteq M$; so
$\A_f^{(\widetilde{\star})_a}\subsetneq R^{(\widetilde{\star})_a}$
and $f\notin N((\widetilde{\star})_a)$. Thus we obtain that
$N(\star)=N((\widetilde{\star})_a)$.}
\end{rem}

In the following theorem we generalize a characterization of P$v$MDs
proved by Arnold and Brewer \cite[Theorem 3]{AB}. It also
generalizes \cite[Theorem 3.7]{Ch}, \cite[Theorems 3.4 and 3.5]{AC},
\cite[Theorem 3.1]{FJS}, and \cite[Theorem 4.7]{S}.

\begin{thm}\label{NKP1} Let $R=\bigoplus_{\alpha\in\Gamma}R_{\alpha}$ be a
graded integral domain, and $\star$ be
a semistar operation on $R$ such that $R^{\star}\subsetneq R_H$.
Then, the following statements are equivalent:
\begin{enumerate}
\item $R$ is a GP$\star$MD.
\item Every ideal of $\NA(R,\star)$ is extended from a homogeneous ideal of $R$.
\item Every principal ideal of $\NA(R,\star)$ is extended from a homogeneous ideal of $R$.
\item $\NA(R,\star)$ is a Pr\"{u}fer domain.
\item $\NA(R,\star)$ is a B\'{e}zout domain.
\item $\NA(R,\star)=\Kr(R,\widetilde{\star})$.
\item $\Kr(R,\widetilde{\star})$ is a quotient ring of $R[X]$.
\item $\Kr(R,\widetilde{\star})$ is a flat $R[X]$-module.
\item $I^{\widetilde{\star}}=I^{(\widetilde{\star})_a}$ for each nonzero homogeneous finitely
generated ideal of $R$.
\end{enumerate}
In particular if $R$ is a GP$\star$MD, then
$R^{\widetilde{\star}}$ is integrally closed.
\end{thm}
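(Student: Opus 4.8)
The plan is to deduce all equivalences from the single pivot $(1)$, organising the remaining eight conditions into a ``Pr\"ufer/extended-ideal'' block $\{(2),(3),(4),(5)\}$ and a ``Kronecker'' block $\{(6),(7),(8),(9)\}$. Abbreviate $\NA(R,\star)$ to $\NA$ and $\Kr(R,\widetilde{\star})$ to $\Kr$. I will use repeatedly that $\NA=\NA(R,\widetilde{\star})$, that $\Max(\NA)=\{Q\NA\mid Q\in h\text{-}\QMax^{\widetilde{\star}}(R)\}$ with $\NA_{Q\NA}=R_Q(X)$ and $\NA=\bigcap_Q R_Q(X)$ (Proposition \ref{NA}), the dictionaries $I^{\widetilde{\star}}=I\NA\cap R_H$ (Proposition \ref{N}(3)) and $I^{(\widetilde{\star})_a}=I\Kr\cap R_H$ (Theorem \ref{p}(3)), and the inclusion $\NA\subseteq\Kr$, which holds because for $g\in N(\star)$ one has $\A_f=\A_f\A_1\subseteq R^{\widetilde{\star}}=\A_g^{\widetilde{\star}}=(\A_g\A_1)^{\widetilde{\star}}$, so every $f/g$ meets the defining condition of $\Kr$ with $h=1$.

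For the first block, $(1)\Rightarrow(4)$: by \cite[Theorem 4.4]{S} each $R_Q$ (for $Q\in h\text{-}\QMax^{\widetilde{\star}}(R)$) is a valuation domain, hence so is the Nagata ring $R_Q(X)=\NA_{Q\NA}$, and $\NA$ is Pr\"ufer. Theorem \ref{pic} turns ``Pr\"ufer'' into ``B\'ezout'', giving $(4)\Leftrightarrow(5)$, while $(4)\Rightarrow(1)$ is Lemma \ref{inv}. For $(1)\Rightarrow(2)$, Corollary \ref{C(f)} gives $f\NA=\A_f\NA$ for every $f$, so each finitely generated ideal of the B\'ezout ring $\NA$ is principal and extended from a homogeneous $\A_f$, whence every ideal is extended from a homogeneous ideal; $(2)\Rightarrow(3)$ is trivial. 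The one implication needing care is $(3)\Rightarrow(1)$. By Corollary \ref{C(f)} it suffices to prove $\A_f\NA=f\NA$ for all $f$. Condition $(3)$ and Proposition \ref{N}(3) let us take the extending ideal to be $J:=f\NA\cap R_H$, so $f\in J\NA$ and therefore $qf\in J[X]$ for some $q\in N(\star)$, i.e.\ $\A_{qf}\subseteq J$. Applying Proposition \ref{dm} to the pair $(q,f)$ yields $\A_q^{m+1}\A_f=\A_q^{m}\A_{qf}$; since $\A_q\NA=\NA$, cancelling the powers of $\A_q$ after extension to $\NA$ collapses this to $\A_f\NA=\A_{qf}\NA\subseteq J\NA=f\NA$, and the reverse inclusion is automatic. (The key is to load the Dedekind--Mertens powers onto $q$, not onto $f$.) Thus every $\A_f$ is $\star_f$-invertible and $R$ is a GP$\star$MD.

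The Kronecker block is closed as $(1)\Leftrightarrow(9)\Rightarrow(6)\Rightarrow(7)\Rightarrow(8)$ together with $(6)\Rightarrow(9)$. For $(1)\Rightarrow(9)$, under $(1)$ each valuation domain $R_Q$ is a $\widetilde{\star}$-valuation overring (because $F^{\widetilde{\star}}\subseteq FR_{H\setminus Q}\subseteq FR_Q$), so Remark \ref{r}(1) together with $I^{(\widetilde{\star})_a}\subseteq R_H$ gives $I^{(\widetilde{\star})_a}\subseteq\bigcap_Q IR_Q\cap R_H=I^{\widetilde{\star}}$; the reverse is $\widetilde{\star}\le(\widetilde{\star})_a$. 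For $(9)\Rightarrow(1)$, $(\widetilde{\star})_a$ is \texttt{e.a.b.}, so Proposition \ref{dm} gives $(\A_f\A_g)^{(\widetilde{\star})_a}=\A_{fg}^{(\widetilde{\star})_a}$, and $(9)$ rewrites both closures with $\widetilde{\star}$, so Theorem \ref{hhh} yields $(1)$. Using $(9)\Rightarrow(1)$, the valuation domains $R_Q$ are available, and $(9)\Rightarrow(6)$ follows: any $f/g\in\Kr=\Kr(R,(\widetilde{\star})_a)$ (Theorem \ref{p}(1)) has $\A_f\subseteq\A_g^{(\widetilde{\star})_a}=\A_g^{\widetilde{\star}}$, which localises (by \cite[Proposition 2.6]{S}) to $\A_fR_Q\subseteq\A_gR_Q$; Gauss's lemma in the valuation domain $R_Q$ gives $f/g\in R_Q(X)$ for all $Q$, so $f/g\in\NA$. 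Finally $(6)\Rightarrow(9)$ is the equality of the two dictionaries, and $(6)\Rightarrow(7)\Rightarrow(8)$ is immediate since $\NA$ is a localisation of $R[X]$, hence a flat $R[X]$-module.

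I expect $(8)\Rightarrow(1)$ to be the main obstacle. Here $\Kr$ is a flat overring of $R[X]$, so $\Kr_{\mathfrak N}=R[X]_{\mathfrak N\cap R[X]}$ for every prime $\mathfrak N$ of $\Kr$. Fix $Q\in h\text{-}\QMax^{\widetilde{\star}}(R)$; the crux is that $Q[X]\Kr\ne\Kr$, which one sees from $Q[X]\Kr\cap R_H\subseteq Q^{(\widetilde{\star})_a}$ and the fact that a $\widetilde{\star}$-valuation overring dominating $R_Q$ (as in Remark \ref{r}(2)) keeps $1\notin Q^{(\widetilde{\star})_a}$. Choosing a prime $\mathfrak N\supseteq Q[X]\Kr$ and writing $\mathfrak P=\mathfrak N\cap R[X]\supseteq Q[X]$, flatness gives $R[X]_{\mathfrak P}=\Kr_{\mathfrak N}$, a valuation domain because $\Kr$ is B\'ezout (Theorem \ref{p}(2)); then $R_Q(X)=R[X]_{Q[X]}$, a further localisation, is a valuation ring, so $R_Q$ is a valuation domain and \cite[Theorem 4.4]{S} gives $(1)$. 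The concluding assertion is then immediate: if $R$ is a GP$\star$MD then by $(5)$ the ring $\NA$ is B\'ezout, hence integrally closed in $K(X)$, and since $R^{\widetilde{\star}}=\NA\cap R_H$ (Proposition \ref{N}(3) with $I=R$), any element of $R_H$ integral over $R^{\widetilde{\star}}$ lies in $\NA\cap R_H=R^{\widetilde{\star}}$, so $R^{\widetilde{\star}}$ is integrally closed.
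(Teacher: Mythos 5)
Your architecture is sound and genuinely different from the paper's: the paper runs the cycles $(1)\Rightarrow(2)\Rightarrow(3)\Rightarrow(1)$, $(1)\Rightarrow(4)\Rightarrow(5)\Rightarrow(6)\Rightarrow(7)\Rightarrow(8)\Rightarrow(6)$ and $(6)\Rightarrow(9)\Rightarrow(1)$, whereas you prove $(4)\Rightarrow(1)$ directly from Lemma \ref{inv}, pivot the Kronecker block on $(1)\Leftrightarrow(9)$ (via $\widetilde{\star}$-valuation overrings and Theorem \ref{hhh}), and close $(8)\Rightarrow(1)$ directly by Richman flatness plus the observation that $R_Q(X)=R[X]_{Q[X]}$ is a further localization of a valuation domain. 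Those steps, as well as your $(3)\Rightarrow(1)$ (loading the Dedekind--Mertens exponent of Proposition \ref{dm} onto $q\in N(\star)$, which makes \emph{every} $\A_f$ invertible rather than only $\A_{a+bX}$ as in the paper), are correct. However, two steps have genuine gaps.

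First, in $(9)\Rightarrow(6)$ the appeal to ``Gauss's lemma in the valuation domain $R_Q$'' does not justify $f/g\in R_Q(X)$. Membership in $R_Q(X)$ is governed by the \emph{coefficient} contents: when $R_Q$ is a valuation domain, $f/g\in R_Q(X)$ if and only if $c(f)R_Q\subseteq c(g)R_Q$, since the valuation on $R_Q(X)$ is the Gaussian extension computed on $X$-coefficients. What you have is $\A_fR_Q\subseteq\A_gR_Q$ for the homogeneous-component contents, and $c(g)\subseteq\A_g$ can be strict; a priori the homogeneous components of a coefficient of $g$ could cancel modulo $QR_Q$, making $c(g)R_Q$ strictly smaller than $\A_gR_Q$. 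That no such cancellation occurs is a real assertion requiring the graded structure (each coefficient of $g$ factors in $R_{H\setminus Q}$ as a homogeneous element times an element whose degree-zero component is $1$, and the latter cannot lie in the homogeneous ideal $QR_{H\setminus Q}$); it is precisely what the proof of Corollary \ref{C(f)}, $(2)\Rightarrow(3)$, establishes in the form $\A_gR_Q(X)=gR_Q(X)$. Since your $(9)\Rightarrow(1)$ makes every $\A_g$ invertible, citing Corollary \ref{C(f)} and writing $f\NA(R,\star)\subseteq\A_f\NA(R,\star)\subseteq\A_g\NA(R,\star)=g\NA(R,\star)$ closes the gap; as written the step is unjustified.

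Second, your proof of the concluding ``in particular'' clause only shows that $R^{\widetilde{\star}}$ is integrally closed \emph{in $R_H$}, not in its quotient field $K$, which is what integral closedness means. An element of $K\setminus R_H$ integral over $R^{\widetilde{\star}}$ lands in $\NA(R,\star)\cap K$, and this is in general strictly larger than $R^{\widetilde{\star}}=\NA(R,\star)\cap R_H$: for $R=D[t,t^{-1}]$ with $D$ a Pr\"ufer domain that is not a field and $\star=d$, one has $\NA(R,d)\cap K=D(t)\supsetneq D[t,t^{-1}]=R^{\widetilde{d}}$, so the containment argument cannot conclude. Two repairs are available: either note that $R^{\widetilde{\star}}$ is a graded subring of $R_H$ (as $\widetilde{\star}$ is homogeneous preserving) and invoke \cite[Proposition 5.4]{AA2}, which upgrades closedness in the homogeneous quotient field to full integral closedness; or argue as the paper does, using your own equivalence with $(9)$: $R^{\widetilde{\star}}=R^{(\widetilde{\star})_a}$, which by Remark \ref{r}(1) is an intersection of valuation overrings of $R$ and hence integrally closed in $K$.
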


\begin{proof} By Theorem
\ref{p}, we have $\Kr(R,\widetilde{\star})$ is well-defined since $\widetilde{\star}$ is homogeneous preserving \cite[Proposition 2.3]{S}.

$(1)\Rightarrow(2)$ Let $0\neq f\in R[X]$. Then $\A_f$ is
$\widetilde{\star}$-invertible, because $R$ is a GP$\star$MD,
and thus $f\NA(R,\star)=\A_f\NA(R,\star)$ by Corollary
\ref{C(f)}. Hence if $A$ is an ideal of $\NA(R,\star)$, then
$A=I\NA(R,\star)$ for some ideal $I$ of $R[X]$, and thus
$A=(\sum_{f\in I}\A_f)\NA(R,\star)$.

$(2)\Rightarrow(3)$ Clear.

$(3)\Rightarrow(1)$ Let $a,b\in H$ and $h:=a+bX$; so $\A_h=(a,b)$.
By (3) $h\NA(R,\star)=I\NA(R,\star)$ for some homogeneous ideal $I$
of $R$. Note that $(a,b)\subseteq I\NA(R,\star)\cap
R_H=I^{\widetilde{\star}}$ by Proposition \ref{N}. Thus
$$
h\NA(R,\star)=I\NA(R,\star)=I^{\widetilde{\star}}\NA(R,\star)\supseteq\A_h\NA(R,\star)\supseteq h\NA(R,\star).
$$
Hence $h\NA(R,\star)=\A_h\NA(R,\star)$. Thus $\A_h=(a,b)$ is $\star_f$-invertible by Corollary \ref{C(f)}.

$(1)\Rightarrow(4)$ Let $A$ be a nonzero finitely generated ideal of
$\NA(R,\star)$. Then, $A=I\NA(R,\star)$ for some nonzero finitely
generated homogeneous ideal $I$ of $R$ by $(1)\Leftrightarrow(2)$.
Since $R$ is a GP$\star$MD, $I$ is $\widetilde{\star}$-invertible,
and thus $A=I\NA(R,\star)$ is invertible by Lemma \ref{inv}.

$(4)\Rightarrow(5)$ Follows from Theorem \ref{pic}.

$(5)\Rightarrow(6)$ Clearly
$\NA(R,\star)\subseteq\Kr(R,\widetilde{\star})$. Since
$\NA(R,\star)$ is a B\'{e}zout domain, then
$\Kr(R,\widetilde{\star})$ is a quotient ring of $\NA(R,\star)$,
by \cite[Proposition 27.3]{G}. If $Q\in
h$-$\QMax^{\widetilde{\star}}(R)$, then
$Q\Kr(R,\widetilde{\star})\subsetneq\Kr(R,\widetilde{\star})$.
Otherwise $Q\Kr(R,\widetilde{\star})=\Kr(R,\widetilde{\star})$, and
hence there is an element $f\in Q$, such that
$f\Kr(R,\widetilde{\star})=\Kr(R,\widetilde{\star})$. Thus
$\frac{1}{f}\in\Kr(R,\widetilde{\star})$. Therefore $R=C(1)\subseteq
\A_f^{(\widetilde{\star})_a}\subseteq R^{(\widetilde{\star})_a}$, so
that $\A_f^{(\widetilde{\star})_a}=R^{(\widetilde{\star})_a}$. Hence
$f\in N((\widetilde{\star})_a)=N(\star)$ by Remark
\ref{r}(2). This means that
$Q^{\widetilde{\star}}=R^{\widetilde{\star}}$, a contradiction. Thus
$Q\Kr(R,\widetilde{\star})\subsetneq\Kr(R,\widetilde{\star})$, and
so there is a maximal ideal $M$ of $\Kr(R,\widetilde{\star})$ such
that $Q\Kr(R,\star)\subseteq M$. Hence $M\cap
\NA(R,\star)=Q\NA(R,\star)$, by Lemma \ref{NA}.
Consequently $R_Q(X)\subseteq \Kr(R,\widetilde{\star})_M$, and since
$R_Q(X)$ is a valuation domain, we have
$R_Q(X)=\Kr(R,\widetilde{\star})_M$. Therefore
$\NA(R,\star)=\bigcap_{Q\in
h\text{-}\QMax^{\widetilde{\star}}(R)}R_Q(X)\supseteq\bigcap_{M\in\Max(\Kr(R,\widetilde{\star}))}\Kr(R,\widetilde{\star})_M$.
Hence $\NA(R,\star)=\Kr(R,\widetilde{\star})$.

$(6)\Rightarrow(7)$ and $(7)\Rightarrow(8)$ are clear.

$(8)\Rightarrow(6)$ Recall that an overring $T$ of an integral
domain $S$ is a flat $S$-module if and only if $T_M=S_{M\cap S}$ for
all $M\in\Max(T)$ by \cite[Theorem 2]{R}.

Let $A$ be an ideal of $R[X]$ such that
$A\Kr(R,\widetilde{\star})=\Kr(R,\widetilde{\star})$. Then there
exists an element $f\in A$ such that
$f\Kr(R,\widetilde{\star})=\Kr(R,\widetilde{\star})$ using Theorem
\ref{p}; so
$\frac{1}{f}\in\Kr(R,\widetilde{\star})=\Kr(R,(\widetilde{\star})_a)$.
Thus $R=C(1)\subseteq \A_f^{(\widetilde{\star})_a}\subseteq
R^{(\widetilde{\star})_a}$, and so
$\A_f^{(\widetilde{\star})_a}=R^{(\widetilde{\star})_a}$. Hence
$\A_f^{\widetilde{\star}}=R^{\widetilde{\star}}$. Therefore $f\in
A\cap N(\star)\neq\emptyset$. So that, if $P_0$ is a homogeneous
quasi-$\widetilde{\star}$-maximal ideal of $R$, then
$P_0\Kr(R,\widetilde{\star})\subsetneq\Kr(R,\widetilde{\star})$, and
since $P_0\NA(R,\star)$ is a maximal ideal of $\NA(R,\star)$, there
is a maximal ideal $M_0$ of $\Kr(R,\widetilde{\star})$ such that
$M_0\cap R=(M_0\cap \NA(R,\star))\cap R=P_0\NA(R,\star)\cap R=P_0$.
Thus by (8),
$\Kr(R,\widetilde{\star})_{M_0}=R_{P_0}(X)=(\NA(R,\star))_{P_0\NA(R,\star)}$.

Let $M_1$ be a maximal ideal of $\Kr(R,\widetilde{\star})$, and let
$P_1$ be a homogeneous quasi-$\widetilde{\star}$-maximal ideal of
$R$ such that $M_1\cap \NA(R,\star)\subseteq
P_1\NA(R,\star)$. By the above paragraph, there is a maximal
ideal $M_2$ of $\Kr(R,\widetilde{\star})$ such that
$\Kr(R,\widetilde{\star})_{M_2}=(\NA(R,\star))_{P_1\NA(R,\star)}$.
Note that
$\Kr(R,\widetilde{\star})_{M_2}\subseteq\Kr(R,\widetilde{\star})_{M_1}$
, $M_1$ and $M_2$ are maximal ideals, and $\Kr(R,\widetilde{\star})$
is a Pr\"{u}fer domain; hence $M_1=M_2$ (cf. \cite[Theorem
17.6(c)]{G}) and
$\Kr(R,\widetilde{\star})_{M_1}=(\NA(R,\star))_{P_1\NA(R,\star)}$.
Thus
$$
\Kr(R,\widetilde{\star})=\bigcap_{M\in\Max(\Kr(R,\widetilde{\star}))}\Kr(R,\widetilde{\star})_M=\bigcap_{P\in
h\text{-}\QMax^{\widetilde{\star}}(R)}(\NA(R,\star))_{P\NA(R,\star)}=\NA(R,\star).
$$

$(6)\Rightarrow(9)$ Assume that
$\NA(R,\star)=\Kr(R,\widetilde{\star})$. Let $I$ be a nonzero
homogeneous finitely generated ideal of $R$. Then by Proposition
\ref{N} and Theorem \ref{p}, we have
$I^{\widetilde{\star}}=I\NA(R,\star)\cap
R_H=I\Kr(R,\widetilde{\star})\cap R_H=I^{(\widetilde{\star})_a}$.

$(9)\Rightarrow(1)$ Let $a$ and $b$ be two nonzero homogeneous
elements of $R$. Then
$((a,b)^3)^{\widetilde{\star}_a}=((a,b)(a^2,b^2))^{\widetilde{\star}_a}$
which implies that
$((a,b)^2)^{\widetilde{\star}_a}=(a^2,b^2)^{\widetilde{\star}_a}$.
Hence $((a,b)^2)^{\widetilde{\star}}=(a^2,b^2)^{\widetilde{\star}}$
and so $(a,b)^2R_{H\backslash P}=(a^2,b^2)R_{H\backslash P}$ for
each homogeneous quasi-$\widetilde{\star}$-maximal ideal $P$ of $R$.
On the other hand $R^{\widetilde{\star}}=R^{\widetilde{\star}_a}$ by
(9). Hence $R^{\widetilde{\star}}$ is integrally closed. Thus
$R^{\widetilde{\star}}R_{H\backslash P}=R_{H\backslash P}$ is
integrally closed. Therefore by \cite[Proposition 4.1]{S},
$R_{H\backslash P}$ is a graded-Pr\"{u}fer domain for each
homogeneous quasi-$\star_f$-maximal ideal of $R$. Thus $R$ is a
GP$\star$MD by \cite[Theorem 4.4]{S}.
\end{proof}

The following theorem is a graded version of a characterization of
Pr\"{u}fer domains proved by Davis \cite[Theorem 1]{D}. It also
generalizes \cite[Theorem 2.10]{DHLZ}, in the $t$-operation,
\cite[Theorem 5.3]{EF}, in the case of semistar operations, and \cite[Theorem 4.8]{S}, in the graded case.

\begin{thm}\label{NKP2} Let $R=\bigoplus_{\alpha\in\Gamma}R_{\alpha}$ be a
graded integral domain, and $\star$ be
a semistar operation on $R$ such that $R^{\star}\subsetneq R_H$.
Then, the following statements are equivalent:
\begin{enumerate}
\item $R$ is a GP$\star$MD.
\item Each homogeneously $(\star,t)$-linked overring of $R$ is a
P$v$MD.
\item Each homogeneously $(\star,d)$-linked overring of $R$ is
a graded-Pr\"{u}fer domain.
\item Each homogeneously $(\star,t)$-linked overring of $R$, is integrally closed.
\item Each homogeneously $(\star,d)$-linked overring of $R$, is integrally closed.
\end{enumerate}
\end{thm}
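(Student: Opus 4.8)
The plan is to prove the cycle $(1)\Rightarrow(2)\Rightarrow(4)\Rightarrow(5)\Rightarrow(1)$ and, separately, $(1)\Rightarrow(3)\Rightarrow(5)$, which together give all five equivalences. The two tools I would use repeatedly are Lemma \ref{link} (a homogeneous overring $T$ of $R$ is $(\star,\star')$-linked exactly when $\NA(R,\star)\subseteq\NA(T,\star')$) and Theorem \ref{NKP1} ($R$ is a GP$\star$MD if and only if $\NA(R,\star)$ is a Pr\"{u}fer domain). I would also invoke the standard facts that an overring of a Pr\"{u}fer domain is Pr\"{u}fer, that a P$v$MD is integrally closed, and that a graded-Pr\"{u}fer domain is integrally closed (the last being implicit in the proof of Theorem \ref{cp}, via \cite[Corollary 3.6]{AA}).

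For $(1)\Rightarrow(2)$ and $(1)\Rightarrow(3)$, I would take $T$ to be a homogeneously $(\star,t)$-linked (resp. $(\star,d)$-linked) overring of $R$. Since $T\subseteq R_H$, Lemma \ref{link} exhibits $\NA(T,t)$ (resp. $\NA(T,d)$) as an overring of the Pr\"{u}fer domain $\NA(R,\star)$ inside $K(X)$, hence it is Pr\"{u}fer; applying Theorem \ref{NKP1} to the graded domain $T$ with the operation $t$ (resp. $d$) then makes $T$ a GP$t$MD (resp. a GP$d$MD). As GP$t$MD $=$ GP$v$MD $=$ P$v$MD and GP$d$MD is graded-Pr\"{u}fer, this yields (2) and (3) (the degenerate case $T=R_H$ being immediate, since $R_H$ is vacuously a GP$v$MD). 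Then $(2)\Rightarrow(4)$ and $(3)\Rightarrow(5)$ are just the integral-closedness of P$v$MDs and of graded-Pr\"{u}fer domains. Finally $(4)\Rightarrow(5)$ is the observation that a $(\star,d)$-linked overring is automatically $(\star,t)$-linked: if $F^{\star}=R^{\star}$ then $FT=T$, whence $(FT)^{t}=T^{t}=T$.

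The remaining implication $(5)\Rightarrow(1)$ is the heart of the matter. By \cite[Theorem 4.4]{S} it suffices to show that $G:=R_{H\setminus P}$ is a graded-Pr\"{u}fer domain for every $P\in h\text{-}\QMax^{\widetilde{\star}}(R)$. The key preliminary point is that every homogeneous overring $W$ of $G$ is a homogeneously $(\star,d)$-linked overring of $R$: recalling that $(\star,d)$-linked is the same as $(\widetilde{\star},d)$-linked, if $F$ is a nonzero homogeneous finitely generated ideal of $R$ with $F^{\widetilde{\star}}=R^{\widetilde{\star}}$, then $F\not\subseteq P$, so $F$ contains a nonzero homogeneous element of $H\setminus P$, which is a unit of $G\subseteq W$; hence $FW=W$. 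Thus hypothesis (5) forces every homogeneous overring of $G$, and in particular $G$ itself, to be integrally closed.

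It then remains to deduce from ``all homogeneous overrings of $G$ are integrally closed'' that $G$ is graded-Pr\"{u}fer, and this is the main obstacle: it is exactly the graded analogue of Davis's theorem \cite[Theorem 1]{D}. Since $G$ is already integrally closed, \cite[Proposition 4.1]{S} reduces the task to proving $(a,b)^2=(a^2,b^2)$ for all nonzero homogeneous $a,b\in G$. Setting $\theta=b/a\in R_H$, the homogeneous overrings $G[\theta^2,\theta^3]$ and $G[\theta^{-2},\theta^{-3}]$ are integrally closed by the previous step, and since $\theta$ and $\theta^{-1}$ are integral over them (roots of $Z^2-\theta^2$ and of $Z^2-\theta^{-2}$), one obtains $\theta\in G[\theta^2,\theta^3]$ and $\theta^{-1}\in G[\theta^{-2},\theta^{-3}]$. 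The delicate part is to extract the honest ideal identity $ab\in(a^2,b^2)$ from these membership relations: integral-closedness of $G$ alone yields, through Theorem \ref{in}, only the $v$-closed version $((a,b)^2)^{v}=((a^2,b^2))^{v}$, so one genuinely needs the entire family of integrally closed homogeneous overrings. I would run Davis's elimination in the graded setting, writing $\theta=\sum_{i\neq 1}g_i\theta^i$ with $g_i\in G$, comparing homogeneous components, and reducing modulo the unique homogeneous maximal ideal $PG$ of $G$; equivalently, one may observe that $\NA(G,d)=R_P(X)$ and reduce directly to the classical statement \cite[Theorem 1]{D} applied to $R_P$. Either way, this homogeneous bookkeeping is where the real work concentrates.
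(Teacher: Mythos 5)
Your skeleton (the cycle $(1)\Rightarrow(2)\Rightarrow(4)\Rightarrow(5)\Rightarrow(1)$ plus $(1)\Rightarrow(3)\Rightarrow(5)$) and all the easy arrows coincide with the paper's proof: $(1)\Rightarrow(2)$ and $(1)\Rightarrow(3)$ are done there exactly as you do them (Lemma \ref{link}, then ``an overring of a Pr\"{u}fer domain is Pr\"{u}fer'' \cite[Theorem 26.1]{G}, then Theorem \ref{NKP1} applied to $T$, then GP$v$MD $=$ P$v$MD by \cite[Theorem 6.4]{AA}), and your observation that every homogeneous overring $W$ of $G=R_{H\setminus P}$ is homogeneously $(\star,d)$-linked is correct and is in fact a clean, self-contained substitute for the paper's citation of \cite[Example 2.14]{S}. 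The genuine gap is in $(5)\Rightarrow(1)$, at exactly the point you flag as ``the delicate part'': that part \emph{is} the theorem, and you do not prove it. The paper closes it without ever needing the ideal identity $(a,b)^2=(a^2,b^2)$ or \cite[Proposition 4.1]{S}: from $u\in T=G[u^2,u^3]$ one writes $u=g_0+\sum_{i\geq 2}g_iu^i$, so $u$ is a root of a polynomial $\gamma\in G[X]$ whose coefficient of $X$ is a unit; then Kaplansky's $u,u^{-1}$-theorem \cite[Theorem 67]{K} (which works here because $G$ is gr-local: every homogeneous non-unit of $G$ lies in $PG$) gives $u\in G$ or $u^{-1}\in G$ for \emph{every} nonzero homogeneous $u\in R_H$, and then \cite[Lemma 4.3]{S} says precisely that such a gr-local graded domain is graded-Pr\"{u}fer, whence $R$ is a GP$\star$MD by \cite[Theorem 4.4]{S}. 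This citation (or a fully worked-out graded $u$-theorem) is what your write-up lacks.

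Of your two proposed ways to finish, the first (Davis-style elimination with homogeneous bookkeeping) is viable but is nothing other than re-proving the graded case of \cite[Theorem 67]{K}, i.e.\ it is asserted rather than done. The second is circular: Davis's theorem \cite[Theorem 1]{D} applied to $R_P$ requires \emph{every} overring of $R_P$ inside the quotient field $K$ to be integrally closed, while hypothesis (5) only controls homogeneous overrings, i.e.\ subrings of $R_H$; since $R_P\not\subseteq R_H$, its (mostly non-homogeneous) overrings are invisible to (5). Worse, ``every overring of $R_P$ is integrally closed'' is equivalent to $R_P$ being a valuation domain, which by \cite[Theorem 4.4]{S} is equivalent to the very conclusion you are trying to reach, so this reduction assumes what is to be proved. (Your identity $\NA(G,d)=G_{PG}(X)=R_P(X)$ is correct, but it does not supply Davis's hypothesis.)
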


\begin{proof} $(1)\Rightarrow(2)$ Let $T$ be a homogeneously $(\star,t)$-linked overring of
$R$. Thus by Lemma \ref{link}, we have $\NA(R,\star)\subseteq
\NA(T,v)$. Since $R$ is a GP$\star$MD, by Theorem
\ref{NKP1}, we have $\NA(R,\star)$ is a Pr\"{u}fer domain. Thus
by \cite[Theorem 26.1]{G}, we have $\NA(T,v)$ is a Pr\"{u}fer
domain. Hence, again by Theorem \ref{NKP1}, we have $T$ is a GP$v$MD. Therefore using \cite[Theorem 6.4]{AA}, $T$ is a P$v$MD.

$(2)\Rightarrow(4)\Rightarrow(5)$ and $(3)\Rightarrow(5)$ are clear.

$(5)\Rightarrow(1)$ Let $P\in h$-$\QMax^{\widetilde{\star}}(R)$. For
a nonzero homogeneous $u\in R_H$, let $T=R[u^2,u^3]_{H\backslash
P}$. Then $R_{H\backslash P}$ and $T$ are homogeneously
$(\star,d)$-linked overring of $R$ by \cite[Example 2.14]{S}. So
that $R_{H\backslash P}$ and $T$ are integrally closed. Hence $u\in
T$, and since $T=R_{H\backslash P}[u^2,u^3]$, there exists a
polynomial $\gamma\in R_{H\backslash P}[X]$ such that $\gamma(u)=0$
and one of the coefficients of $\gamma$ is a unit in $R_{H\backslash
P}$. So $u$ or $u^{-1}$ is in $R_{H\backslash P}$ by \cite[Theorem
67]{K}. Therefore by \cite[Lemma 4.3]{S}, $R_{H\backslash P}$ is a
graded-Pr\"{u}fer domain. Thus $R$ is a GP$\star$MD by \cite[Theorem
4.4]{S}.

$(1)\Rightarrow(3)$ Is the same argument as in part
$(1)\Rightarrow(2)$.
\end{proof}

\begin{cor}\label{ppp} Let $R=\bigoplus_{\alpha\in\Gamma}R_{\alpha}$ be a
graded integral domain. Then, the following statements are equivalent:
\begin{enumerate}
\item $R$ is a graded-Pr\"{u}fer domain.
\item Every ideal of $\NA(R,d)$ is extended from a homogeneous ideal of $R$.
\item Every principal ideal of $\NA(R,d)$ is extended from a homogeneous ideal of $R$.
\item $\NA(R,d)$ is a Pr\"{u}fer domain.
\item $\NA(R,d)$ is a B\'{e}zout domain.
\item $\NA(R,d)=\Kr(R,d)$.
\item $\Kr(R,d)$ is a quotient ring of $R[X]$.
\item $\Kr(R,d)$ is a flat $R[X]$-module.
\item Each homogeneous overring of $R$, is integrally closed.
\item Each homogeneous overring of $R$ is a graded-Pr\"{u}fer domain.
\item $I^b=I$ for each nonzero homogeneous finitely
generated ideal of $R$.
\end{enumerate}
\end{cor}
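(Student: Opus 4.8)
The plan is to obtain every equivalence by specializing Theorems \ref{NKP1} and \ref{NKP2} to the identity semistar operation $\star=d$, so the real content is the bookkeeping that translates their conditions into those listed here. First I would record the identifications that make $d$ tractable. Since the identity operation is already stable and of finite type, $\widetilde{d}=d$; hence $\NA(R,d)=\NA(R,\widetilde{d})$ and $\Kr(R,\widetilde{d})=\Kr(R,d)$, and $I^{\widetilde{d}}=I^{d}=I$ for every homogeneous ideal $I$. Moreover, by Remark \ref{r}(1) every valuation overring of $R$ is a $d$-valuation overring, because the defining inclusion $F^{d}=F\subseteq FV$ is automatic; therefore $F^{d_a}=\bigcap\{FV\mid V\text{ a valuation overring of }R\}=F^{b}$, i.e. $(\widetilde{d})_a=d_a=b$.

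With these in hand, the equivalence of (1)--(8) is precisely Theorem \ref{NKP1} read with $\star=d$, where $\NA(R,\star)$ becomes $\NA(R,d)$ and $\Kr(R,\widetilde{\star})$ becomes $\Kr(R,d)$. Condition (9) of that theorem, namely $I^{\widetilde{\star}}=I^{(\widetilde{\star})_a}$ for all nonzero homogeneous finitely generated $I$, becomes $I=I^{b}$ by the identifications above, which is exactly condition (11) here. This settles the equivalence of (1)--(8) and (11) with statement (1).

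For (9) and (10) I would invoke Theorem \ref{NKP2} with $\star=d$, and here lies the one step that is not pure transcription. The key observation is that a homogeneously $(d,d)$-linked overring is nothing other than an arbitrary homogeneous overring: the linking condition $F^{d}=R^{d}\Rightarrow (FT)^{d}=T^{d}$ reduces, since $d$ is the identity, to the tautology $F=R\Rightarrow FT=T$, which every homogeneous overring $T$ satisfies. Consequently condition (3) of Theorem \ref{NKP2} (each homogeneously $(\star,d)$-linked overring is a graded-Pr\"{u}fer domain) specializes to condition (10) here, and condition (5) (each such overring is integrally closed) specializes to condition (9) here. Because Theorem \ref{NKP2} gives $(1)\Leftrightarrow(3)\Leftrightarrow(5)$, this yields the equivalence of (9) and (10) with (1). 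Chaining through the common statement ``$R$ is a graded-Pr\"{u}fer domain'' then links all eleven conditions.

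The only point requiring genuine care is the standing hypothesis $R^{\star}\subsetneq R_H$ of both theorems, which for $\star=d$ reads $R\subsetneq R_H$. If instead $R=R_H$, then every nonzero $f\in R[X]$ has some invertible homogeneous coefficient, so $\A_f=R$, forcing $N(d)=R[X]\setminus\{0\}$ and making $\NA(R,d)$ the quotient field of $R[X]$; in this degenerate case $R$ is vacuously a graded-Pr\"{u}fer domain and each of (1)--(11) holds trivially. Hence we may assume $R\subsetneq R_H$ and finish by the two theorems. I expect the main obstacle to be nothing deeper than verifying the three translations $\widetilde{d}=d$, $d_a=b$, and ``$(d,d)$-linked $=$ homogeneous overring'' are stated correctly; once they are in place, no further computation is needed.
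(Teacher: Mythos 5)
Your proof is correct and follows essentially the same route as the paper, whose entire proof reads: let $\star=d$ in Theorems \ref{NKP1} and \ref{NKP2} and note that $d_a=b$. The bookkeeping you supply --- verifying $\widetilde{d}=d$ and $(\widetilde{d})_a=b$ via Remark \ref{r}(1), identifying homogeneously $(d,d)$-linked overrings with arbitrary homogeneous overrings so that conditions (3) and (5) of Theorem \ref{NKP2} become (10) and (9), and disposing of the degenerate case $R=R_H$ excluded by the standing hypothesis $R^{\star}\subsetneq R_H$ of both theorems --- is exactly what the paper's one-line proof leaves implicit, and your explicit treatment of the case $R=R_H$ is in fact more careful than the original.
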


\begin{proof} Let $\star=d$ the identity operation in Theorems \ref{NKP1} and \ref{NKP2}, and note that $d_a=b$.
\end{proof}

\begin{exam}\label{E}
{\em Let $R$ be a graded-Pr\"{u}fer domain which is not Pr\"{u}fer
(e.g. $R=D[X,X^{-1}]$ for a Pr\"{u}fer domain $D$ which is not a
field  and an indeterminate $X$ over $D$ \cite[Example 3.6]{AC}).
Then $\NA(R,d)=\Kr(R,d)$ by Corollary \ref{ppp}, but $\Na(R,d)$ is
not Pr\"{u}fer \cite[Theorem 33.4]{G}. Thus $\NA(R,d)\neq\Na(R,d)$.
Also we note that $\Kr(R,d)\neq\mbox{Kr}(R,d)$. Since otherwise
$\NA(R,d)=\mbox{Kr}(R,d)$ and so $\mbox{Kr}(R,d)$ is a quotient ring
of $R[X]$, and hence $R$ is a Pr\"{u}fer domain (\cite[Theorem
33.4]{G}) which is absurd. Finally there are $f,g\in R[X]$ such that
$c(fg)\neq c(f)c(g)$ by \cite[Theorem 28.6]{G} while
$\A_{fg}=\A_f\A_g$ by Theorem \ref{cp}.}
\end{exam}

The next corollary gives new characterizations of P$v$MDs for graded
integral domains, which is the special cases of Theorems \ref{hhh}, \ref{NKP1}, and \ref{NKP2}, for $\star=v$.

\begin{cor}\label{v} Let $R=\bigoplus_{\alpha\in\Gamma}R_{\alpha}$ be a
graded integral domain. Then, the
following statements are equivalent:
\begin{enumerate}
\item $R$ is a (graded) P$v$MD.
\item Every ideal of
$\NA(R,v)$ is extended from a homogeneous ideal of $R$.
\item $\NA(R,v)$ is a Pr\"{u}fer domain.
\item $\NA(R,v)$ is a B\'{e}zout domain.
\item $\NA(R,v)=\Kr(R,w)$.
\item $\Kr(R,w)$ is a quotient ring of $R[X]$.
\item $\Kr(R,w)$ is a flat $R[X]$-module.
\item Each homogeneously $t$-linked overring of $R$ is a P$v$MD.
\item Each homogeneously $t$-linked overring of $R$, is integrally closed.
\item $(\A_f\A_g)^w=\A_{fg}^w$ for all $f, g\in R_H[X]$.
\item $I^w=I^{w_a}$ for each nonzero homogeneous finitely
generated ideal of $R$.
\end{enumerate}
\end{cor}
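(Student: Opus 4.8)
The plan is to derive Corollary \ref{v} as the case $\star=v$ of Theorems \ref{hhh}, \ref{NKP1}, and \ref{NKP2}, translating each clause through the identity $w=\widetilde{v}$. Before invoking those theorems I would fix the elementary facts that power the translation. Since $v$ is a (semi)star operation, $R^v=R$, so the running hypothesis $R^{\star}\subsetneq R_H$ becomes $R\subsetneq R_H$; the remaining case $R=R_H$ is a graded field and is degenerate, so I may assume $R\subsetneq R_H$ and apply the three theorems. By definition $w=\widetilde{v}$, whence $\NA(R,v)=\NA(R,\widetilde{v})$, $\Kr(R,\widetilde{v})=\Kr(R,w)$, and $(\widetilde{v})_a=w_a$; moreover, since $t=v_f$ and $v,t$ agree on finitely generated ideals, $\widetilde{t}=\widetilde{v}=w$. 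Finally I would recall that for $\star=v$ the notions GP$v$MD and P$v$MD coincide by \cite[Theorem 6.4]{AA}, which is exactly what licenses writing ``(graded) P$v$MD'' in clause (1).

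With these identities recorded, clauses (1)--(7) and (10)--(11) are literal transcriptions and require no new argument. Clause (1) is Theorem \ref{NKP1}(1) (GP$v$MD, hence P$v$MD); clause (2) is \ref{NKP1}(2); clauses (3) and (4) are \ref{NKP1}(4) and \ref{NKP1}(5); clause (5) is \ref{NKP1}(6) written as $\NA(R,v)=\Kr(R,\widetilde{v})=\Kr(R,w)$; clauses (6) and (7) are \ref{NKP1}(7) and \ref{NKP1}(8) with $\Kr(R,\widetilde{v})=\Kr(R,w)$; clause (11) is \ref{NKP1}(9) with $(\widetilde{v})_a=w_a$; and clause (10) is Theorem \ref{hhh} with $\widetilde{v}=w$, namely $(\A_f\A_g)^w=\A_{fg}^w$. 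So the bulk of the corollary falls out as soon as the equivalences of Theorems \ref{NKP1} and \ref{hhh} are specialized to $v$.

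The only pair needing a short separate argument is (8)--(9), which comes from Theorem \ref{NKP2}(2) and (4). Those are phrased via homogeneously $(v,t)$-linked overrings, whereas the corollary speaks of homogeneously $t$-linked overrings, i.e. homogeneously $(t,t)$-linked ones. Here I would invoke the stated fact that $T$ is homogeneously $(\star,\star')$-linked over $R$ if and only if it is homogeneously $(\widetilde{\star},\widetilde{\star'})$-linked; together with $\widetilde{v}=\widetilde{t}=w$ this shows that the $(v,t)$-linked and the $(t,t)$-linked overrings are precisely the homogeneously $(w,w)$-linked overrings, so the two classes coincide and clauses (8), (9) are exactly \ref{NKP2}(2), \ref{NKP2}(4). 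This identification of the linked-overring classes is the one genuinely non-transcriptional step and the point I would be most careful to justify; everything else is bookkeeping with $w=\widetilde{v}$ and the coincidence of GP$v$MD with P$v$MD.
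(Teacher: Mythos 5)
Your proposal is correct and takes essentially the same route as the paper, which proves Corollary \ref{v} precisely by specializing Theorems \ref{hhh}, \ref{NKP1}, and \ref{NKP2} to $\star=v$, using $w=\widetilde{v}=\widetilde{t}$, $(\widetilde{v})_a=w_a$, and the coincidence of GP$v$MD with P$v$MD from \cite[Theorem 6.4]{AA}. Your explicit handling of the degenerate case $R=R_H$ and of the identification of homogeneously $(v,t)$-linked with homogeneously $(t,t)$-linked overrings only spells out what the paper leaves implicit.
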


In concluding we determine when $\NA(R,v)$ is a PID. Recall from
\cite[Definition 5.1]{AA} that a graded integral domain is called a
\emph{graded Krull domain} if it is completely integrally closed
(with respect to homogeneous elements) and satisfies the ascending
chain condition on homogeneous $v$-ideals.

\begin{thm}\label{krull} Let $R=\bigoplus_{\alpha\in\Gamma}R_{\alpha}$ be a
graded integral domain. Then, the following statements are equivalent:
\begin{enumerate}
\item $R$ is a graded Krull domain.
\item $\NA(R,v)$ is a Dedekind domain.
\item $\NA(R,v)$ is a PID.
\end{enumerate}
\end{thm}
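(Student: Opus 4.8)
The plan is to dispatch $(2)\Leftrightarrow(3)$ first, and then to prove $(1)\Leftrightarrow(3)$ by decomposing the two requirements in the definition of a graded Krull domain into a ``Pr\"{u}fer part'' and a ``Noetherian part'', each of which matches a property of $\NA(R,v)$. For $(2)\Leftrightarrow(3)$, every $\mathrm{PID}$ is a Dedekind domain, so $(3)\Rightarrow(2)$ is immediate; conversely, if $\NA(R,v)$ is a Dedekind domain then each of its nonzero ideals is invertible, and by Theorem \ref{pic} every invertible ideal of $\NA(R,v)$ is principal, so every nonzero ideal is principal and $\NA(R,v)$ is a $\mathrm{PID}$.

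For $(1)\Leftrightarrow(3)$ I would use the structural fact (from \cite{AA}) that $R$ is a graded Krull domain if and only if $R$ is a GP$v$MD satisfying the ascending chain condition on homogeneous $v$-ideals. The Pr\"{u}fer part is precisely Corollary \ref{v}: $R$ is a (graded) P$v$MD if and only if $\NA(R,v)$ is a B\'{e}zout (equivalently, Pr\"{u}fer) domain. It then remains to match ``$R$ satisfies ACC on homogeneous $v$-ideals'' with ``$\NA(R,v)$ is Noetherian''. The P$v$MD hypothesis, which holds in both directions by Corollary \ref{v}, is what makes this work: it guarantees (again by Corollary \ref{v}) that every ideal of $\NA(R,v)$ is extended from a homogeneous ideal of $R$, and it forces $t=w=\widetilde{v}$ so that the relevant star operations coincide on homogeneous divisorial ideals.

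To transfer the chain conditions I would set up the order-preserving assignments $A\mapsto A\cap R_H$ (on ideals of $\NA(R,v)$) and $I\mapsto I\NA(R,v)$ (on homogeneous ideals of $R$). Since $w$ is a (semi)star operation we have $\NA(R,v)\cap R_H=R^{w}=R$, so for an integral ideal $A$ the contraction $A\cap R_H$ lies in $R$; by Proposition \ref{N}(3) it equals $(A\cap R_H)^{w}$, hence is a homogeneous $w$-ideal, and $A=(A\cap R_H)\NA(R,v)$ because every ideal is extended. Conversely $I\NA(R,v)\cap R_H=I^{w}$ for every homogeneous ideal $I$, again by Proposition \ref{N}(3). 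Thus the two assignments are mutually inverse and strictly monotone between the ideals of $\NA(R,v)$ and the homogeneous $w$-ideals of $R$, whence $\NA(R,v)$ is Noetherian if and only if $R$ satisfies ACC on homogeneous $w$-ideals. Combining the parts: if $R$ is graded Krull then $R$ is a P$v$MD with ACC on homogeneous $v$-ideals, so it is graded Mori and $v=w$ on homogeneous divisorial ideals; hence $\NA(R,v)$ is B\'{e}zout and Noetherian, i.e. a $\mathrm{PID}$, giving $(1)\Rightarrow(3)$. Conversely, if $\NA(R,v)$ is a $\mathrm{PID}$ it is B\'{e}zout (so $R$ is a P$v$MD) and Noetherian, giving ACC on homogeneous $w$-ideals and therefore, since every $v$-ideal is a $w$-ideal, ACC on homogeneous $v$-ideals; thus $R$ is graded Krull, giving $(3)\Rightarrow(1)$.

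The step I expect to be the main obstacle is reconciling the $v$- and $w$-operations, and with it the exact form of the chain transfer: one must verify that $A\cap R_H$ is genuinely an integral homogeneous $w$-ideal rather than a fractional submodule of $R_H$, that the correspondence is strictly order-preserving in both directions, and that for a P$v$MD the ACC on homogeneous $v$-ideals and on homogeneous $w(=t)$-ideals agree (one inclusion being automatic since every $v$-ideal is a $t$-ideal, the other using that a P$v$MD with ACC on homogeneous $v$-ideals is graded Mori, whence $v=w$ on homogeneous divisorial ideals). This is exactly the place where the P$v$MD property, forced by Corollary \ref{v} in both directions, carries the argument.
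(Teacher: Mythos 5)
Your proposal is correct, but the chain-condition half runs along a genuinely different track from the paper's. You share the paper's treatment of $(2)\Leftrightarrow(3)$ (via Theorem \ref{pic}) and of the Pr\"{u}fer part (via Corollary \ref{v}), but the paper never touches ACC on homogeneous $v$-ideals at all: its pivot is the characterization in \cite[Theorem 2.4]{AC1} that $R$ is a graded Krull domain iff every nonzero homogeneous ideal of $R$ is $w$-invertible. With that in hand, $(1)\Rightarrow(2)$ only needs that $w$-invertible ideals are $w$-finite, so each extended ideal $I\NA(R,v)=J\NA(R,v)$ (with $J\subseteq I$ finitely generated homogeneous, $I^w=J^w$) is finitely generated and $\NA(R,v)$ is Noetherian; and $(2)\Rightarrow(1)$ picks such a $J$ from finite generation of $I\NA(R,v)$, deduces $I^w=J^w$, and then shows $I$ itself is $w$-invertible by the computation $JI^{-1}\subseteq(JJ^{-1})^w=R$, whence $J^{-1}=I^{-1}$ and $(II^{-1})^w=R$. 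You instead decompose ``graded Krull'' as GP$v$MD plus ACC on homogeneous $v$-ideals and transfer ACC wholesale through the order isomorphism $A\mapsto A\cap R_H$, $I\mapsto I\NA(R,v)$ between ideals of $\NA(R,v)$ and homogeneous $w$-ideals of $R$; that correspondence is set up correctly (using Proposition \ref{N}(3) and Corollary \ref{v}) and yields the clean lattice-theoretic statement that $\NA(R,v)$ is Noetherian iff $R$ has ACC on homogeneous $w$-ideals, with no invertibility computation. The price is two ingredients the paper's route avoids: (i) the structural fact you attribute to \cite{AA} is not the definition quoted in the paper (which is homogeneous complete integral closedness plus ACC on homogeneous $v$-ideals), so it must be located precisely or derived --- it amounts to ``homogeneously completely integrally closed iff every nonzero homogeneous ideal is $v$-invertible'' plus a Mori-type maximal-element argument, i.e.\ essentially the content of \cite[Theorem 2.4]{AC1}, which the paper invokes in a single citation; and (ii) the identities $w=t$ in a P$v$MD and $t=v$ on homogeneous ideals under the graded Mori condition, which you assert but only gesture at. Both are true and standard, so there is no genuine gap, but making them airtight would make your argument longer than the paper's, whose invertibility-based route is the more economical one given the literature it leans on.
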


\begin{proof} $(2)\Leftrightarrow(3)$ Follows from Theorem \ref{pic}.

$(1)\Rightarrow(2)$ If $R$ is a graded Krull domain, then every nonzero homogeneous ideal of $R$ is $w$-invertible, by \cite[Theorem 2.4]{AC1}. So in particular $R$ is a (graded) P$v$MD, and every nonzero homogeneous ideal of $R$ is $w$-finite. Thus by Corollary \ref{v}, $\NA(R,v)$ is a Pr\"{u}fer domain and every ideal of $\NA(R,v)$ is extended from a homogeneous ideal of $R$. Assume that $A$ is an ideal of $\NA(R,v)$. Hence $A=I\NA(R,v)$, for some homogeneous ideal $I$ of $R$. So that there is a finitely generated homogeneous ideal $J\subseteq I$ such that $I^w=J^w$. Therefore $A=I\NA(R,v)=I^w\NA(R,v)=J^w\NA(R,v)=J\NA(R,v)$ is finitely generated. Hence $A$ is finitely generated and so $\NA(R,v)$ is Noetherian. Thus $\NA(R,v)$ is a Dedekind domain.

$(2)\Rightarrow(1)$ If $\NA(R,v)$ is a Dedekind domain, then $R$ is a graded P$v$MD by Corollary \ref{v}. Let $I$ be a homogeneous ideal of $R$. Thus $I\NA(R,v)$ is finitely generated. So that there is a finitely generated homogeneous ideal $J\subseteq I$ such that $I\NA(R,v)=J\NA(R,v)$ and hence $I^w=J^w$. Thus $J$ is $w$-invertible. Note that $JI^{-1}\subseteq(JI^{-1})^w\subseteq(JJ^{-1})^w=R$. Therefore we obtain that $J^{-1}=I^{-1}$. Thus $R=(JJ^{-1})^w=(JI^{-1})^w\subseteq(II^{-1})^w\subseteq R$. This means that $I$ is $w$-invertible. Now \cite[Theorem 2.4]{AC1}, implies that $R$ is a graded Krull domain.
\end{proof}

\begin{cor} Let $R=\bigoplus_{\alpha\in\Gamma}R_{\alpha}$ be a
graded integral domain. Then, the following statements are equivalent:
\begin{enumerate}
\item $R$ is a Krull domain.
\item $R_H$ is a UFD and $\NA(R,v)$ is a PID.
\item $R_H$ and $\NA(R,v)$ are both Krull domains.
\end{enumerate}
\end{cor}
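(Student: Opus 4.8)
The plan is to prove the cyclic chain $(1)\Rightarrow(2)\Rightarrow(3)\Rightarrow(1)$, using Theorem~\ref{krull} to control the Nagata ring $\NA(R,v)$ and two facts about the homogeneous quotient field that I would isolate first. Observe that $R_H$ is a \emph{graded field}, i.e.\ a graded domain all of whose nonzero homogeneous elements are units; hence every nonzero homogeneous ideal of $R_H$ contains a unit and so equals $R_H$, and the only homogeneous prime of $R_H$ is $(0)$. The two facts I would invoke are: (Fact A) a graded integral domain $R$ is a Krull domain if and only if $R$ is a graded Krull domain and $R_H$ is a Krull domain; and (Fact B) a graded field is a Krull domain if and only if it is a UFD. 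Fact A is the standard reduction of the Krull property to its graded counterpart together with a condition on $R_H$ (cf.\ \cite{AA}); for Fact B I would argue that if $R_H$ is Krull then, its divisor class group being generated by the classes of homogeneous height-one primes and there being no nonzero homogeneous prime, one gets $\mathrm{Cl}(R_H)=0$, whence $R_H$ is a UFD, the converse being the general implication that a UFD is Krull.

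For $(1)\Rightarrow(2)$ I would note that a Krull domain is completely integrally closed and satisfies the ascending chain condition on integral divisorial ideals, so in particular $R$ meets the definition of a graded Krull domain; Theorem~\ref{krull} then yields that $\NA(R,v)$ is a PID. Since $R_H=R[H^{-1}]$ is a localization of the Krull domain $R$, it is again Krull, and being a graded field it is a UFD by Fact B. The implication $(2)\Rightarrow(3)$ is immediate, since every PID and every UFD is a Krull domain.

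The substance lies in $(3)\Rightarrow(1)$. Granting $R_H$ Krull, Fact A reduces the task to showing that $R$ is a graded Krull domain, and by Theorem~\ref{krull} this is equivalent to $\NA(R,v)$ being a PID; so the crux is to upgrade ``$\NA(R,v)$ is Krull'' to ``$\NA(R,v)$ is a PID.'' The structural input is Proposition~\ref{NA}(2): every maximal ideal of $\NA(R,v)$ has the form $Q\NA(R,v)$ with $Q\in h\text{-}\QMax^{w}(R)$ and $\NA(R,v)_{Q\NA(R,v)}=R_Q(X)$. Once $\NA(R,v)$ is Krull each such localization is quasilocal Krull, and I would show these are one-dimensional, so that $\NA(R,v)$ is Dedekind, by transporting the divisorial structure down to $R$: the correspondence $I\mapsto I\NA(R,v)$ together with the identity $I^{w}=I\NA(R,v)\cap R_H$ of Proposition~\ref{N}(3) pulls an ascending chain of divisorial ideals of $\NA(R,v)$ back to an ascending chain of homogeneous $w$-ideals of $R$, giving the ascending chain condition, while complete integral closure descends because an element of $R_H$ almost integral over $R$ is almost integral over $\NA(R,v)$ and hence lies in $\NA(R,v)\cap R_H=R^{w}$. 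This shows $R$ is a graded Krull domain, so $\NA(R,v)$ is a PID by Theorem~\ref{krull}, and Fact A completes the deduction that $R$ is Krull.

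I expect the main obstacle to be precisely this final upgrade ``$\NA(R,v)$ Krull $\Rightarrow$ $\NA(R,v)$ PID,'' that is, pinning down the dimension and the class group of $\NA(R,v)$: one must verify that the passage through Proposition~\ref{N}(3) genuinely matches the $v$- and $w$-operations on $R$ with the divisorial ideals of $\NA(R,v)$, and that complete integral closure is recovered for $R$ itself rather than merely for $R^{w}$. The graded-field input (Fact B) is the other delicate point, as it is where the hypothesis on $R_H$ is converted between the Krull and the UFD formulations.
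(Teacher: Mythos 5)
Your implications $(1)\Rightarrow(2)$ and $(2)\Rightarrow(3)$ are sound and essentially the paper's own (the paper gets the UFD property of $R_H$ a bit more cheaply, from the fact that $R_H$ is a GCD-domain \cite[Proposition 2]{AA2} together with Krull, instead of your class-group Fact B; and your worry about complete integral closure landing in $R^{w}$ rather than $R$ is vacuous, since $R\subseteq R^{w}\subseteq R^{v}=R$). The genuine problem is $(3)\Rightarrow(1)$. Your plan is to prove that $R$ is a \emph{graded} Krull domain by descending the Krull axioms from $\NA(R,v)$, then apply Theorem \ref{krull} and your Fact A. The descent of complete integral closure works, but the descent of the chain condition does not, and you flag it yourself as ``the main obstacle'' without closing it: to get ACC on homogeneous $v$-ideals of $R$ you must push a chain $I_1\subseteq I_2\subseteq\cdots$ up to $\NA(R,v)$ and invoke ACC there, but a Krull domain satisfies ACC only on \emph{divisorial} ideals, and at this stage you know nothing (no GP$v$MD property of $R$, no Pr\"{u}fer or Dedekind property of $\NA(R,v)$) that would make the extended ideals $I_n\NA(R,v)$ divisorial in $\NA(R,v)$. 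Replacing them by their divisorial closures $(I_n\NA(R,v))^{v}$ restores ACC upstairs but destroys the recovery step, since Proposition \ref{N}(3) gives $I_n^{w}=I_n\NA(R,v)\cap R_H$ for the extension itself, not for its $v$-closure. So the central step of your argument is unproved. In addition, Fact A (graded Krull together with $R_H$ Krull implies Krull) is a nontrivial assertion in its own right, invoked here without proof.

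All of this machinery is unnecessary, and the missing idea sits inside the very proposition you quote: take $I=R$ in Proposition \ref{N}(3). Since $R\subseteq R^{w}\subseteq R^{v}=R$, this yields the identity
$$
R=R^{w}=\NA(R,v)\cap R_H .
$$
Under (3), $\NA(R,v)$ and $R_H$ are Krull domains contained in the common field $K(X)$, and a finite intersection of Krull domains inside a common field is again Krull (write each as a finite-character intersection of DVRs and restrict those valuations to the quotient field of the intersection). Hence $R$ is Krull. This one-line argument is the paper's entire proof of $(3)\Rightarrow(1)$: no graded-Krull detour, no Fact A, and no upgrade of ``$\NA(R,v)$ Krull'' to ``$\NA(R,v)$ PID'' is needed.
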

\begin{proof} $(1)\Rightarrow(2)$ A Krull domain is a graded Krull domain, and thus $\NA(R,v)$ is a PID by Theorem \ref{krull}. Also, $R_H$ is a UFD since $R_H$ is a GCD-domain by \cite[Proposition 2]{AA2} and a Krull domain.

$(2)\Rightarrow(3)$ Clear.

$(3)\Rightarrow(1)$ This follows from the fact that $R=R^w=\NA(R,v)\cap R_H$ by Proposition \ref{N}.
\end{proof}

\begin{center} {\bf ACKNOWLEDGMENT}
\end{center}

The author wish to thank the referee for an insightful report.

\end{document}